\tikzset{
  font={\fontsize{8pt}{12}\selectfont}}
\title{Function approximation on arbitrary domains using Fourier extension frames}
\author{Roel Matthysen, Daan Huybrechs}
\newcommand{\os}{\gamma}
\newcommand{\e}{e}
\renewcommand{\imath}{\mathrm{i}}
\newcommand{\bszero}{\mathbf{0}}
\newcommand{\bsn}{\boldsymbol{n}}
\newcommand{\bsxi}{\boldsymbol{\xi}}
\newcommand{\bsx}{\boldsymbol{x}}
\newcommand{\bsy}{\boldsymbol{y}}
\newcommand{\bsl}{\mathbf{l}}
\newcommand{\bsk}{\mathbf{k}}
\newcommand{\bsa}{\mathbf{a}}
\newcommand{\bsc}{\mathbf{c}}
\newcommand{\matlab}{\begin{sc} matlab \end{sc}}
\newtheorem{thm}{Theorem}
\newtheorem{mydef}{Definition}
\newtheorem{remark}{Remark}
\DeclareMathOperator*{\argmin}{arg\,min} % thin space, limits underneath in displays
\DeclareMathOperator*{\argmax}{arg\,max} % thin space, limits underneath in displays
\definecolor{mygreen}{RGB}{28,172,0} % color values Red, Green, Blue
\newlength{\figurewidth}
\newlength{\figureheight}
\DeclareMathOperator{\spn}{span}
\date{\today} 
\begin{document}

\maketitle

\begin{abstract}
Fourier extension is an approximation scheme in which a function on an arbitary
bounded domain is approximated using a classical Fourier series on a bounding
box. On the smaller domain the Fourier series exhibits redundancy, and it has
the mathematical structure of a frame rather than a basis. It is not trivial to
construct approximations in this frame using function evaluations in points that
belong to the domain only, but one way to do so is through a discrete least
squares approximation. The corresponding system is extremely ill-conditioned,
due to the redundancy in the frame, yet its solution via a regularized SVD is
known to be accurate to very high (and nearly spectral) precision. Still, this
computation requires ${\mathcal O}(N^3)$ operations. In this paper we describe
an algorithm to compute such Fourier extension frame approximations in only
${\mathcal O}(N^2 \log^2 N)$ operations for general 2D domains. The cost
improves to ${\mathcal O}(N \log^2N)$ operations for simpler tensor-product
domains. The algorithm exploits a phenomenon called the plunge region in the
analysis of time-frequency localization operators, which manifests itself here
as a sudden drop in the singular values of the least squares matrix. It is known
that the size of the plunge region scales like ${\mathcal O}(\log N)$ in one
dimensional problems. In this paper we show that for most 2D domains in the
fully discrete case the plunge region scales like ${\mathcal O}(N \log N)$,
proving a discrete equivalent of a result that was conjectured by Widom for a
related continuous problem. The complexity estimate depends on the Minkowski or
box-counting dimension of the domain boundary, and as such it is larger than ${\mathcal O}(N \log N)$ for domains with fractal shape.
% The algorithm applies to all domains that have non-trivial volume, but for the analysis we additionally require that the domain be simply connected and have a finite boundary. In particular, the complexity estimate is higher for domains whose . The algorithm equally applies when using Chebyshev polynomials on the bounding box, and the cost can be improved to ${\mathcal O}(N \log^2N)$ for domains that have tensor-product structure such as squares and rectangles.

% In this paper we describe an algorithm to compute Frame approximations from
% discrete data points through collocation. Such approximations have attracted
% attention recently for their flexibility regarding domain and basis choices
% while maintaining good convergence properties. We present examples of
% the varied domains this method is able to tackle.

% Our algorithm depends on a particular singular value distribution, which is
% present in both equispaced Fourier collocation and Chebyshev collocation in
% Chebyshev nodes.
% The algorithm complexity is reduced from $O(N^3)$ for a full Singular Value
% Decomposition to $O(N^2\log^2{N})$, for 2D domains. This complexity bound only
% requires the domain to be simply connected, and have a finite boundary.

\end{abstract}
% Introduction

\section{Introduction}

The nature of the discrete representation of a continuous function is an important choice in many applications. A
representation needs to be efficiently constructed as well as easy to use. The number of degrees of freedom in
the representation influences both of these considerations. We focus on methods that offer a
high rate of convergence for increasing degrees of freedom, specifically converging at least superalgebraically.

In two dimensions and higher, the domain of the function is an important
complication in the approximation process. When the domain has some structure,
such as a rectangle, it is possible to modify existing one-dimensional spectral
or high order methods to fit the domain. When the domain shape is arbitrary,
in many cases one resorts to a polygonal mesh with basis functions that have low orders of smoothness. An alternative that does offer spectral accuracy is the use of Radial Basis
Functions, suitable for unstructured data as long as shape parameters are well chosen \cite{Fornberg2008a}.

The representation used in this paper is based on orthogonal
basis functions on a bounding box that encloses the domain. The objective is to
obtain an expansion in this basis that resembles a given function as
closely as possible on the domain. An example of a
possible resulting approximation is shown in \cref{fig:example}, which contains
all the main elements of this procedure. In \cref{fig:example1} the
function $f(x,y)=\cos(20x^2-15y^2)$ is shown, where the domain takes on the
shape of the country Belgium. This function
is approximated using Fourier basis functions on a bounding
rectangle. Therefore the expansion in
\cref{fig:example1b} is periodic. Such an approximation is straightforward to compute when the function to be approximated can be evaluated throughout the bounding box. Yet, the approximation problem itself becomes challenging when one is restricted to function samples in the irregular domain only, and that is the setting we pursue.

\setlength{\figurewidth}{8cm}
\setlength{\figureheight}{4cm}

\begin{figure}[hbtp]
  \centering
  \subfloat[Data points]{\includegraphics[scale=0.15]{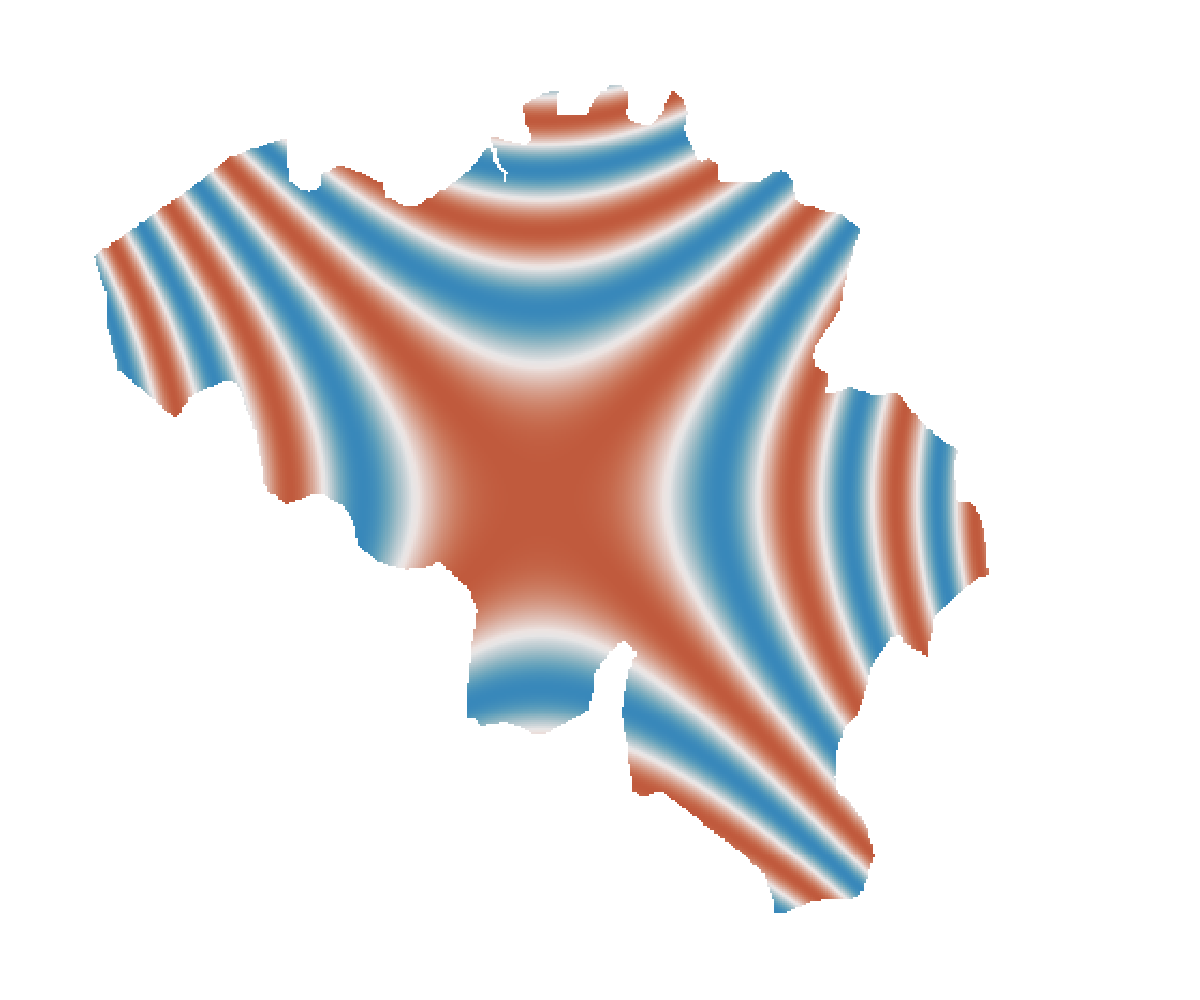}\label{fig:example1}}
  \subfloat[Fourier Series on bounding box]{\includegraphics[scale=0.15]{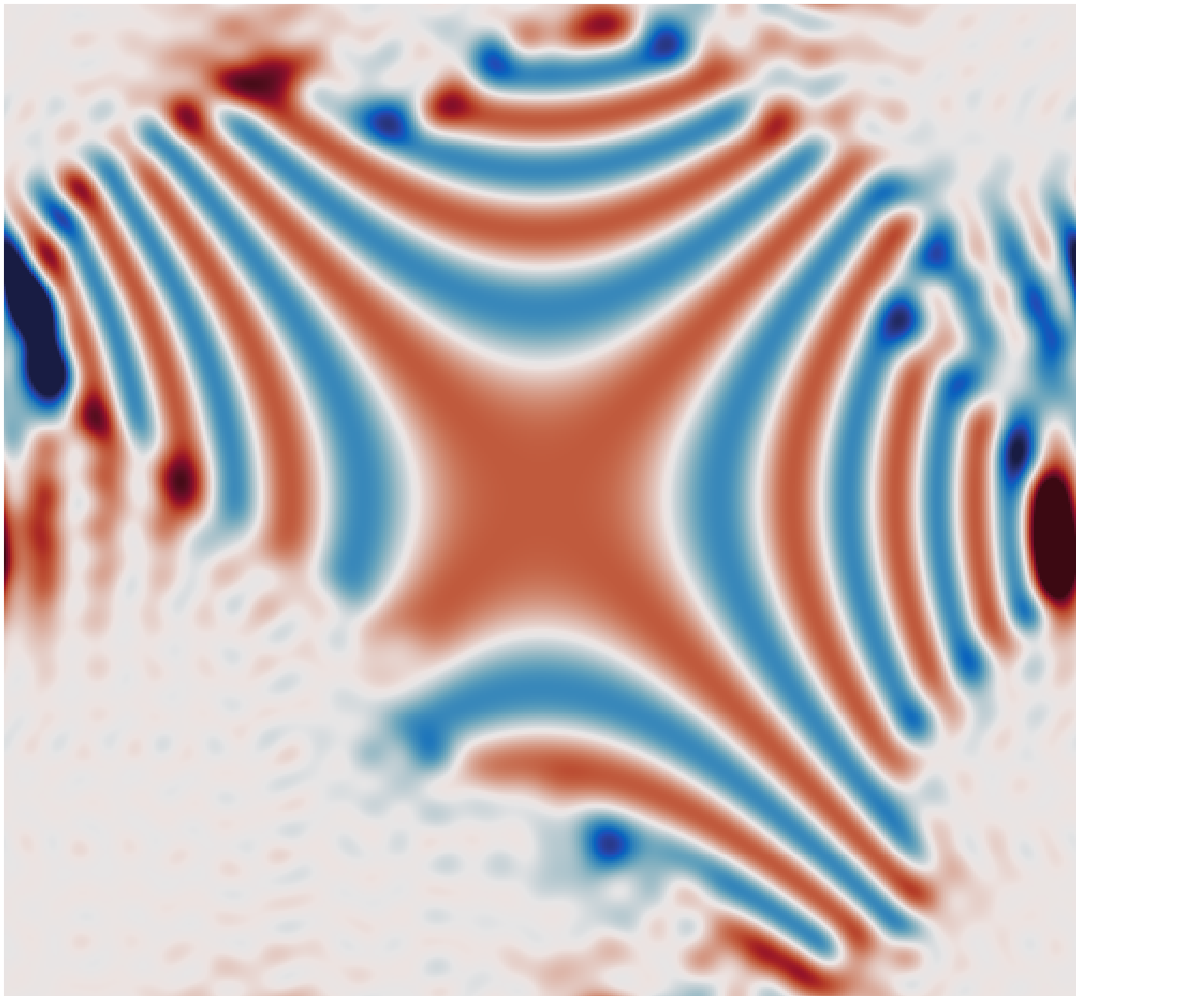}\label{fig:example1b}}
  \caption{Approximation of $f(x,y)=\cos(20x^2-15y^2)$ on a Belgium-shaped
    domain, using a Fourier series on a bounding box. }
  \label{fig:example}
\end{figure}

This method for one-dimensional Fourier bases is known as the Fourier extension (FE) or Fourier continuation (FC) technique \cite{Boyd2002, Boyd2005, Bruno2003, Bruno2007, Huybrechs2010}. In this technique the
approximation is computed by performing a least squares approximation on an equispaced grid of
collocation points. This allows the use of the FFT, which facilitates fast
algorithms. The main problem is the ill-conditioning of the collocation matrix. 

The FE method is closely related to embedded or fictitious domain methods for solving certain partial differential equations using
Fourier basis functions,
the main difference being the approximation in the extension region. In embedded
domain methods the function is explicitly extended outside the domain of
interest, e.\ g.\ through convolution with Gaussian kernels
\cite{Bueno-Orovio2006a} or using polynomial corrections \cite{Platte2009}. In the
Fourier extension technique, the approximation in the extension region is determined implicitly through solving a least
squares problem.  

Convergence properties of FE for function approximation were first described in \cite{Boyd2002} and \cite{Bruno2003}. It was shown that a Fourier basis approximation can converge
exponentially to $f$ inside the domain of interest. The approximation problem can be solved using discrete least squares and a truncated singular value decomposition. Bruno and Pohlmann used the FE
technique in higher dimensions to obtain smooth and periodic extensions around boundaries of complicated surfaces. This was the basis for the very efficient
FC-Gram method for approximations and solving differential equations, using 1D Fourier extensions combined with an ADI approach \cite{Bruno2010, Lyon2010, Albin2011}. Analysis of Fourier extensions continued in the context of frames, with precise convergence rates and error bounds now available \cite{Huybrechs2010,Adcock2011,Adcock2012,Adcock2016}. A major conclusion in the latter references is that, in spite of potentially extreme ill-conditioning, the regularized SVD solution of the discrete problem is numerically stable once sufficiently many degrees of freedom are used. Furthermore, a discrete least squares is more accurate than projection methods using the Gram matrix. This is true not only for Fourier extension, but more generally for numerical approximations in frames \cite{Adcock2016}.

Although the FE technique is very flexible when it comes to domain
choice and yields fast converging approximations, computing the expansion is not
efficient. The singular value decomposition is cubic in the number of degrees of freedom, making it very slow for even mildly oscillatory functions. 
In \cite{Lyon2011} and \cite{Matthysen2015}, two distinct $O(N\log^2 N)$ algorithms
for the 1D approximation scheme were introduced, where $N$ is the number of
degrees of freedom. 

The $O(N\log^2 N)$ complexity of the latter reference \cite{Matthysen2015} originates in the connection of this problem to Prolate Spheroidal Wave theory \cite{Slepian1978a,Landau1961,Pollack1961}. It allows for an interpretation of the singular values and vectors of the problem, in terms of functions that are maximally concentrated simultaneously in time and frequency. This is a topic in signal processing mostly, but in our context it shows that the singular values can be separated into three clusters: one cluster of values is exponentially close to $1$, a second one is exponentially close to $0$, and a third transitional set in between contains values exponentially dropping from $1$ to machine precision. This very particular distribution of singular values is illustrated in \cref{fig:intervals} further on and is seen for domains of any shape. The latter transitional set is called the \emph{plunge region}. Crucially, the size of this plunge region has smaller complexity than the other clusters: it is known for one-dimensional problems that it grows only as $O(\log{N})$ \cite{Wilson1987}. The fast algorithm of \cite{Matthysen2015} is based on a projection of the problem onto a smaller one, with dimensions governed by the size of the plunge region, which is solved with a direct method. It is followed by a post-processing step that amounts to a single FFT. Details are recalled in \cref{sec:problem} of this paper. In spite of its mathematical intricacies, the algorithm itself is short and simple, and a full implementation for the 1D case was included in the appendix of \cite{Matthysen2015}.

It is clear that the cost of the algorithm is directly influenced by the size of the plunge region. Unfortunately, the plunge region for 2D domains is relatively larger than it is in 1D. For tensor-product domains, the tensor product structure can be readily exploited to generalize the algorithm of \cite{Matthysen2015} with near-optimal complexity. For other domains, somewhat surprisingly, the plunge--projection method still applies virtually without modification. However, the size of the plunge region has not received as much study as in the one-dimensional case. In this paper we establish the size of the plunge region, and, hence, the computational cost of the algorithm. Loosely speaking, the plunge region scales with the size of the boundary of the domain, rather than with the domain itself. As such, it is a lower-dimensional phenomenon, which we confirm in this paper for the 2D case\footnote{Julia code for this algorithm is incorporated in
  the FrameFun package, at \url{http://github.com/daanhb/FrameFun.jl}}.

\subsection{Overview and main results of the paper}

In \cref{sec:problem} of this paper we describe the application of the 1D algorithm to
the multidimensional case, and we state the conditions that are necessary for a speedup compared to the full direct solver with cubic cost. This entails a proper choice of sample points in time and frequency.

In \cref{sec:spectrum} we show a bound on the singular value distribution of the collocation matrix. As indicated above, this corresponds to an estimate of the size of the plunge region for the discrete Fourier transform, for two-dimensional domains in the time domain and a rectangular region in the frequency domain. This bound is a direct generalization of the discrete time-frequency localization results of Wilson in \cite{Wilson1987}, and are a
discrete analog of recent results by Sobolev \cite{Sobolev2010,Sobolev2015} on a related conjecture by Widom \cite{Widom1982}. Most of our results, but not all, are formulated for a general dimension $D$.

\Cref{sec:experiments} contains numerical results for approximations on a variety of domains in a Fourier basis, demonstrating the flexibility, accuracy and speed of our approach.

The contribution of this paper is twofold: we show the accuracy and reliability of Fourier frame approximations in higher dimensions, while providing
a deeper understanding of the spectral properties of the problem through explicit bounds. This leads to an algorithm that is asymptotically faster than a direct solver on a rank-deficient rectangular least squares problem.

% \subsection{Overview of the paper}
% In a first section we formulate the discrete frame approximation problem and describe the necessary
% conditions for a fast algorithm to be applicable. In a second section we prove
% that these conditions are met for the frames we are interested in. Then in a
% third section we illustrate the algorithm through some examples, mostly of 2D
% frame approximations.

%%% Local Variables:
%%% mode: latex
%%% TeX-master: "paper2d"
%%% End:

% Problem Formulation
%%%%%%
% LIST OF NOTATIONAL SHORTCUTS
%%%%%%
% Domain
\newcommand{\domt}{\Omega}
% Set of Points in domain
\newcommand{\domtp}{P_\Omega}
% Index set of Points in domain
\newcommand{\idomtp}{I_\Omega}
% Tiling of Points in domain
\newcommand{\tdom}{S_\Omega}
% Number of Points in domain
\newcommand{\ndomtp}{N_\Omega}
% Domain boundary
\newcommand{\domb}{\delta\Omega}
% Set of Points in domain boundary
\newcommand{\dombp}{P_{\delta\Omega}}
% Number of Points in domain boundary
\newcommand{\ndombp}{N_{\delta\Omega}}
% Bounding box (time)
\newcommand{\bboxt}{R}
% Set of points in bounding box (time)
\newcommand{\bboxtp}{P_R}
% Index set of bounding box
\newcommand{\ibboxtp}{I_R}
% Number of points in bbox (time) grid
\newcommand{\nbboxtp}{N_R}
% Number of points in single d dimension (time) grid
\newcommand{\singletp}{n_R}
% Frequency domain
\newcommand{\domf}{\Lambda}
% Set of points in frequency domain
\newcommand{\domfp}{P_\Lambda}
% Number of points in frequency domain
\newcommand{\ndomfp}{N_\Lambda}
% Bounding box (frequency)
\newcommand{\bboxf}{\tilde{R}}
% Set of points in bounding box (frequency)
\newcommand{\bboxfp}{P_{\tilde{R}}}
% Number of points in bounding box (frequency)
\newcommand{\nbboxfp}{N_{\tilde{R}}}
% Number of points in single d dimension (frequency) grid
\newcommand{\singlefp}{n_\Lambda}
% N1
\newcommand{\Neps}{\eta(\epsilon,\nbboxtp)}
% N2
\newcommand{\Ntwo}{N^D}
% General operator
\newcommand{\TBT}{T_{\domt} B_\Lambda T_{\domt}}
% General operator (bis)
\newcommand{\BTTB}{B_\Lambda T_{\domt}B_\Lambda}
% General frame
\newcommand{\fframe}{\mathcal{F}}
% General set of frame basis functions
\newcommand{\gframe}{\mathcal{G}}
% Dimensional index
\newcommand{\dimi}{d}
% Maximum dimension
\newcommand{\Dim}{D}
% Degrees of freedom
\newcommand{\dof}{N^D}

\section{Problem formulation}
\label{sec:problem}

\subsection{Approximation in a Fourier frame}

Informally, the problem formulation is as follows: given a function $f$ and a domain $\domt \subset \mathbb{R}^D$, find a Fourier series $F$ that minimizes a suitable norm $\Vert F-f \Vert_X$ on $\domt$, using only information about $f$ on $\domt$.

Let us elaborate and be precise. Without loss of generality, assume that $\domt
\subset \bboxt = [0,1]^D$, so that we can use a tensor-product of the standard
Fourier series on $[0,1]$. In order to avoid any periodicity requirements on $f$, we assume further that $\domt$ lies fully in the interior of the box $\bboxt$. In the following, we will consistently use the symbols $\domt$ for the time domain, and $\domf$ in the frequency domain. For an index set $\domfp$ with $\ndomfp$ frequencies, we denote the basis functions and the function space they span by
\begin{align}
  \label{eq:16}
\phi_{\bsl}(\bsx)&=\e^{\imath(\bsx\cdot\bsl)2\pi},\\
    \gframe_{\ndomfp}&=\spn\{\phi_{\bsl}\}_{\bsl\in\domfp}.
\end{align}
Here, $\bsx=(\bsx_1,\dots,\bsx_D)$ is a $D$-dimensional point and
$\bsl=(\bsl_1,\dots,\bsl_D)$ is a $D$-dimensional integer index. For simplicity, we assume an equal number of degrees of freedom $\singlefp$ per dimension, hence $\ndomfp=\singlefp^D$ and $\lfloor \singlefp/2\rfloor <\bsl_i<\lceil \singlefp/2 \rceil$. This restriction could be lifted at the cost of minor complications further on, and we do not have this restriction in our implementation.

The approximation problem in this space is stated as
\begin{equation}
  \label{eq:17}
  F_{\ndomfp}(f)=\min_{ g \in \gframe_{\ndomfp} }  \Vert f-g \Vert_X.
\end{equation}
Note here that the function set $\gframe$ restricted to $\domt$ is not
a basis for $\mathcal{L}^2(\Omega)$, but it is a frame in the sense of Duffin and Schaeffer \cite{Duffin1952}. For
a recent overview of numerical frame approximations, see \cite{Adcock2016}.

Any $g\in\gframe$ is uniquely described by a set of coefficients
$\bsa\in\mathbb{C}^{\singlefp\times\dots\times\singlefp}$. In the remainder we
will often assume an implicit linearization
$\bsa\in\mathbb{C}^{\ndomfp}$. These coefficients will be the result of the approximation algorithm, so we look for
\begin{equation}
  \label{eq:15}
  \bsa= \argmin_{\bsc\in\mathbb{C}^{\ndomfp}}\Vert f-\sum_{\bsl\in \domfp} \bsc_{\bsl}\phi_{\bsl} \Vert_X.
\end{equation}

It remains to define the norm $\Vert \cdot \Vert_X$. The choice of the $\mathcal{L}^2$ norm over $\domt$ is fairly natural, and this choice leads to the so-called
\emph{Continuous Frame approximation}. The minimizer of \cref{eq:15} is found by constructing the Gram matrix
\begin{equation}
  \label{eq:35}
  A^{G}_{\bsk,\bsl}= \langle \phi_\bsk,\phi_\bsl \rangle_\domt,
\end{equation}
and solving the system
\begin{equation}
  \label{eq:continuous_system}
  A^{G}\bsa=b,\qquad b_{\bsl}= \langle f, \phi_\bsl \rangle_\domt.
\end{equation}
However, the computational cost associated with evaluating the integrals in the right hand side of \eqref{eq:continuous_system} is considerable, since the integrals are over $\domt$ and not over the full box $\bboxt$. This precludes the use of the FFT and one would have to resort to some type of quadrature on $\domt$.

\begin{figure}[hbtp]
\centering
\subfloat[time domain]{\begin{tikzpicture}
    \foreach \x in {-2,-1.6,...,2}{
      \foreach \y in {-2,-1.6,...,2}{
        \pgfmathparse{(1.6+0.5*cos(5*(atan(\y/(\x+0.005))-90+90*sign(\x+0.005))))^2>(\x*\x+\y*\y) ? int(1) : int(0)}
        \ifnum\pgfmathresult=1
           \fill (\x,\y) circle(0.05cm);
        \else
           \draw (\x,\y) circle(0.05cm);
        \fi
}
}
\draw[domain=0:360,scale=1,samples=500] plot (\x:{1.6+0.5*cos(5*\x)});
\fill[color=white] (0,0) circle(0.3cm);
\node (pomega) at (0,0) {$P_\Omega$};
\fill[color=white] (1.6,1.6) circle(0.3cm);
\node (pr) at (1.6,1.6) {$P_R$};
\draw (-2.2,-2.2) rectangle (2.2,2.2);
  \end{tikzpicture}}\hspace{40pt}
\subfloat[frequency domain]{  \begin{tikzpicture}
    \foreach \x in {-2,-1.6,...,2}{
      \foreach \y in {-2,-1.6,...,2}{
        \pgfmathparse{max(abs(\x),abs(\y))<1.5 ? int(1) : int(0)}
        \ifnum\pgfmathresult=1
           \fill (\x,\y) circle(0.05cm);
        \else
           \draw (\x,\y) circle(0.05cm);
        \fi
}
}
\fill[color=white] (0,0) circle(0.3cm);
\node (pomega) at (0,0) {$P_\Lambda$};
\fill[color=white] (1.6,1.6) circle(0.3cm);
\node (pr) at (1.6,1.6) {$P_{\tilde{R}}$};
\draw (-2.2,-2.2) rectangle (2.2,2.2);
\draw (-1.4,-1.4) rectangle (1.4,1.4);
  \end{tikzpicture}}
    \caption{The spatial domain $\domt$ encompassing the sample set $\domtp$, and the frequency domain $\domf$ encompassing the discrete frequencies $\domfp$. There is a fast FFT transform between the encompassing sets $\bboxtp$ and $\bboxfp$.}
\label{fig:samplepoints}
\end{figure}

Instead, in this paper we will focus on the \emph{Discrete Frame approximation}. The corresponding norm is a discrete summation over a set of collocation points. For Fourier
approximations, we choose a set of equispaced points $\bboxtp$ on $\bboxt$
(recall that $\bboxt = [0,1]^D$) with $\singletp$ points per dimension, and
restrict those to $\domt$. In summary
\begin{equation}
  \label{eq:19}
\bboxtp = \left\{\left. \left(\frac{k_1}{\singletp},\dots,\frac{k_D}{\singletp}
    \right) \right| \forall i: 0\leq k_i< \singletp \right\}, \qquad\domtp=\bboxtp \cap \domt.
\end{equation}

There are efficient transformations using the FFT between the set $\bboxtp$ of
$\nbboxtp = \singletp^D$ points in the time domain and the set $\bboxfp$ in the
frequency domain. If we choose $\singletp \geq \singlefp$, then $\bboxfp$
encompasses $\domfp$. The sampling sets thus defined are shown in
\cref{fig:samplepoints}. Though other choices can be made, this choice is such
that we can efficiently evaluate a Fourier series using the index set $\domfp$
in all the points of $\domtp$: we extend the coefficients with zeros from
$\domfp$ to $\bboxfp$, followed by an FFT transform from $\bboxfp$ to $\bboxtp$,
followed by a restriction of the values to those points in $\domtp$. 

Throughout this paper we assume a fixed oversampling rate, meaning
$\ndomfp/\nbboxfp$ is constant. We refer to \cite{Adcock2013} for a study on the
interplay of oversampling rate and choice of bounding box in one dimension.

The minimization (\ref{eq:15}) can be reformulated as a discrete least squares problem
\begin{equation}
  \label{eq:17b}
  F_{\ndomfp}(f)=\argmin_{g\in\gframe_{\ndomfp}}  \sum_{
\bsx\in\domtp}(f(\bsx)-g(\bsx))^2.
\end{equation}
Assuming a linear indexing $\bsx_k$ of $\domtp$ from $1$ to $\ndomtp$ and $\phi_j$ of $\domfp$ from $1$ to
$\ndomfp$, it can be written as a least squares matrix problem
\begin{equation}
  \label{eq:lsq}
  A\bsa = b, \quad A\in\mathbb{C}^{\ndomtp\times\ndomfp}, \quad b\in\mathbb{C}^{\ndomtp}
\end{equation}
where
\begin{equation}
\label{eq:collocationdefinition}
A_{kj}=\frac{1}{\sqrt{\nbboxtp}}\phi_{j}(\bsx_k),\qquad b_{k}=f(\bsx_k).
\end{equation}
The scaling of the basisfunctions is such that $A$ is precisely a subblock of a
multidimensional unitary DFT matrix, which is of importance to \cref{alg:dpss2}.
This subblock property is a consequence of our choice of discrete grids, and it results in a fast matrix-vector product using the procedure described above: extension in frequency domain, discrete Fourier transform, and restriction in the time domain. Indeed, note that in this discrete setting the action of the matrix $A$ corresponds to evaluating a length $\ndomfp$ Fourier series in the points of $\domtp$.

Note that there is also a fast matrix-vector product for $A'$ and it corresponds to the opposite sequence of operations: extension from $\domtp$ to $\bboxtp$ by zeros, fast transform to $\bboxfp$, followed by restriction in the frequency domain from $\bboxfp$ to $\domfp$. Yet, it is clear that the solution to $Ax=b$ is not simply given by $x=A'b$: the latter is accurate only when the extension of the function $f$ by zero is well approximated by a Fourier series, which in general it is not. Extension by zero in the time domain introduces a discontinuity. On the other hand, extending a Fourier series with additional zero coefficients does not affect the function it describes.

\begin{remark}
Though the examples shown use Fourier bases exclusively, a Chebyshev collocation matrix in Chebyshev points is entirely analogous. It consists of a scaled subblock of a multidimensional DCT matrix. Therefore, most of the arguments, although not made explicit in this paper, apply to this case as well. This is of course due to the close connection between Chebyshev polynomials and trigonometric polynomials.
\end{remark}

\subsection{An approximation algorithm by projection onto the plunge region}

\label{sec:algorithm}

\Cref{eq:lsq} corresponds to a dense, rectangular linear system that is rank-deficient: the condition number of $A$ is exponentially large. Yet, high accuracy can be achieved with direct solvers such as a pivoted QR decomposition
(\matlab's backslash). Iterative algorithms, on the other hand, are much less suitable, in spite of a fast matrix-vector product being available, due to the ill-conditioning.

\setlength{\figurewidth}{8cm}
\setlength{\figureheight}{4cm}
\begin{figure}[hbtp]
\centering
 \begin{tikzpicture}
      \node (A) at (0,0){
        % This file was created by matlab2tikz v0.4.3.
% Copyright (c) 2008--2013, Nico Schlömer <nico.schloemer@gmail.com>
% All rights reserved.
% 
% The latest updates can be retrieved from
%   http://www.mathworks.com/matlabcentral/fileexchange/22022-matlab2tikz
% where you can also make suggestions and rate matlab2tikz.
% 
\begin{tikzpicture}

\begin{axis}[%
width=\figurewidth,
height=\figureheight,
scale only axis,
xmin=101,
xmax=401,
xtick={200.5},
xticklabels={$\frac{\ndomtp\ndomfp}{\nbboxtp}$},
xlabel={i},
ymode=log,
ymin=1e-18,
ymax=10,
ytick={1e-16, 1e-08,     1},
yminorticks=true,
ylabel={$\sigma{}_\text{i}$}
]
\addplot [
color=blue,
solid,
forget plot
]
table[row sep=crcr]{
1 1\\
2 1\\
3 1\\
4 1\\
5 1\\
6 1\\
7 1\\
8 1\\
9 1\\
10 1\\
11 1\\
12 1\\
13 1\\
14 1\\
15 1\\
16 1\\
17 1\\
18 1\\
19 1\\
20 1\\
21 1\\
22 1\\
23 1\\
24 1\\
25 1\\
26 1\\
27 1\\
28 1\\
29 1\\
30 1\\
31 1\\
32 1\\
33 1\\
34 1\\
35 1\\
36 1\\
37 1\\
38 1\\
39 1\\
40 1\\
41 1\\
42 1\\
43 1\\
44 1\\
45 1\\
46 1\\
47 1\\
48 1\\
49 1\\
50 1\\
51 1\\
52 1\\
53 1\\
54 1\\
55 1\\
56 1\\
57 1\\
58 1\\
59 1\\
60 1\\
61 1\\
62 1\\
63 1\\
64 1\\
65 1\\
66 1\\
67 1\\
68 1\\
69 1\\
70 1\\
71 1\\
72 1\\
73 1\\
74 1\\
75 1\\
76 1\\
77 1\\
78 1\\
79 1\\
80 1\\
81 1\\
82 1\\
83 1\\
84 1\\
85 1\\
86 1\\
87 1\\
88 1\\
89 1\\
90 1\\
91 1\\
92 1\\
93 1\\
94 1\\
95 1\\
96 1\\
97 1\\
98 1\\
99 1\\
100 1\\
101 1\\
102 1\\
103 1\\
104 1\\
105 1\\
106 1\\
107 1\\
108 1\\
109 1\\
110 1\\
111 1\\
112 1\\
113 1\\
114 1\\
115 1\\
116 1\\
117 1\\
118 1\\
119 1\\
120 1\\
121 1\\
122 1\\
123 1\\
124 1\\
125 1\\
126 1\\
127 1\\
128 1\\
129 1\\
130 1\\
131 1\\
132 1\\
133 1\\
134 1\\
135 1\\
136 1\\
137 1\\
138 1\\
139 1\\
140 1\\
141 1\\
142 1\\
143 1\\
144 1\\
145 1\\
146 1\\
147 1\\
148 1\\
149 1\\
150 1\\
151 1\\
152 0.999999999999999\\
153 0.999999999999999\\
154 0.999999999999999\\
155 0.999999999999999\\
156 0.999999999999999\\
157 0.999999999999999\\
158 0.999999999999999\\
159 0.999999999999999\\
160 0.999999999999999\\
161 0.999999999999999\\
162 0.999999999999999\\
163 0.999999999999999\\
164 0.999999999999999\\
165 0.999999999999999\\
166 0.999999999999999\\
167 0.999999999999999\\
168 0.999999999999999\\
169 0.999999999999999\\
170 0.999999999999999\\
171 0.999999999999999\\
172 0.999999999999999\\
173 0.999999999999999\\
174 0.999999999999999\\
175 0.999999999999999\\
176 0.999999999999999\\
177 0.999999999999999\\
178 0.999999999999998\\
179 0.999999999999998\\
180 0.999999999999996\\
181 0.999999999999996\\
182 0.999999999999994\\
183 0.999999999999919\\
184 0.999999999999423\\
185 0.999999999996074\\
186 0.999999999973983\\
187 0.999999999832426\\
188 0.999999998952047\\
189 0.999999993644962\\
190 0.999999962680517\\
191 0.999999788119967\\
192 0.999998839238557\\
193 0.999993878273071\\
194 0.999969011704913\\
195 0.999850030505143\\
196 0.999310095837935\\
197 0.997012128074812\\
198 0.988042423955423\\
199 0.95739991677528\\
200 0.87329565480279\\
201 0.707106781186548\\
202 0.487190619062562\\
203 0.288765301514372\\
204 0.154182257294063\\
205 0.0772451711871729\\
206 0.037139363973817\\
207 0.017318097437692\\
208 0.00787246021866295\\
209 0.00349905935643662\\
210 0.00152365400785586\\
211 0.00065096852348711\\
212 0.000273201327214996\\
213 0.00011273896632449\\
214 4.57810880511899e-05\\
215 1.83071315633366e-05\\
216 7.21336233455454e-06\\
217 2.80199418265696e-06\\
218 1.07352223327795e-06\\
219 4.05834968634566e-07\\
220 1.51441266313666e-07\\
221 5.58009126251093e-08\\
222 2.03082761446577e-08\\
223 7.30230801877191e-09\\
224 2.5948541884616e-09\\
225 9.11449414820242e-10\\
226 3.16528675706672e-10\\
227 1.08702578736242e-10\\
228 3.69232002268287e-11\\
229 1.24066728685844e-11\\
230 4.12463691980625e-12\\
231 1.35733582145616e-12\\
232 4.42003964741545e-13\\
233 1.42742163701434e-13\\
234 4.54773519809507e-14\\
235 1.50687724205528e-14\\
236 5.89983174337997e-15\\
237 5.69881624950702e-15\\
238 5.6003323309543e-15\\
239 5.50264185774455e-15\\
240 5.4252600360708e-15\\
241 5.1830735312815e-15\\
242 5.1368879205954e-15\\
243 4.97629386919343e-15\\
244 4.92401427098856e-15\\
245 4.84999617010169e-15\\
246 4.79805998390163e-15\\
247 4.70579925112866e-15\\
248 4.66657367563156e-15\\
249 4.57186165188267e-15\\
250 4.56419158349445e-15\\
251 4.51714969728416e-15\\
252 4.44897945565394e-15\\
253 4.41297696712668e-15\\
254 4.36775913047172e-15\\
255 4.28796601023895e-15\\
256 4.26883241550268e-15\\
257 4.20198640858813e-15\\
258 4.16066586411662e-15\\
259 4.14502491942751e-15\\
260 4.04752773071054e-15\\
261 4.02002070958001e-15\\
262 3.94604127047529e-15\\
263 3.87563009433251e-15\\
264 3.83578980667469e-15\\
265 3.80666507640145e-15\\
266 3.7969027820344e-15\\
267 3.7424793338949e-15\\
268 3.70087844460706e-15\\
269 3.61869239789062e-15\\
270 3.6006269263034e-15\\
271 3.55097631364902e-15\\
272 3.52239227413196e-15\\
273 3.4869403117794e-15\\
274 3.46278563257072e-15\\
275 3.42369037991773e-15\\
276 3.38107814020415e-15\\
277 3.34956028405386e-15\\
278 3.30421414059321e-15\\
279 3.28383667685452e-15\\
280 3.21840225752796e-15\\
281 3.15067051499966e-15\\
282 3.13252297187441e-15\\
283 3.08737594026692e-15\\
284 3.04780961496623e-15\\
285 3.00818291527548e-15\\
286 2.95638169015464e-15\\
287 2.93489875108495e-15\\
288 2.90476027804772e-15\\
289 2.85911180214696e-15\\
290 2.80235856811339e-15\\
291 2.78538880670818e-15\\
292 2.71932185266321e-15\\
293 2.7028149862441e-15\\
294 2.66414753452827e-15\\
295 2.62919250445293e-15\\
296 2.59251049657372e-15\\
297 2.56699608402979e-15\\
298 2.54487374527012e-15\\
299 2.50489330502225e-15\\
300 2.46312331986186e-15\\
301 2.44142121999655e-15\\
302 2.40276710329481e-15\\
303 2.37509993707391e-15\\
304 2.32760359772636e-15\\
305 2.3057109899816e-15\\
306 2.27921314941029e-15\\
307 2.2432645452491e-15\\
308 2.22943989892751e-15\\
309 2.18497200067638e-15\\
310 2.1803415268201e-15\\
311 2.14670573379456e-15\\
312 2.1116787442456e-15\\
313 2.08489616763686e-15\\
314 2.04828803079519e-15\\
315 2.02093026403333e-15\\
316 2.00257062852027e-15\\
317 1.97577068711085e-15\\
318 1.94066631808673e-15\\
319 1.91035337961431e-15\\
320 1.90261049242461e-15\\
321 1.84960496418279e-15\\
322 1.83352361982122e-15\\
323 1.81591373010123e-15\\
324 1.78676907654867e-15\\
325 1.76868212275333e-15\\
326 1.74769066088856e-15\\
327 1.72188782568121e-15\\
328 1.70335637345011e-15\\
329 1.67396320575549e-15\\
330 1.6646921689456e-15\\
331 1.62479841653213e-15\\
332 1.60095198002813e-15\\
333 1.58228863842419e-15\\
334 1.57204319067562e-15\\
335 1.51027929859025e-15\\
336 1.48555755613872e-15\\
337 1.47923834025061e-15\\
338 1.42468360971492e-15\\
339 1.40876547145308e-15\\
340 1.38621171940967e-15\\
341 1.35182593207901e-15\\
342 1.3338828805722e-15\\
343 1.30401525627695e-15\\
344 1.28700599005213e-15\\
345 1.27004371995624e-15\\
346 1.26650795571463e-15\\
347 1.23584287166474e-15\\
348 1.20799989452959e-15\\
349 1.191364311687e-15\\
350 1.17656238002719e-15\\
351 1.1372878772812e-15\\
352 1.1050297518822e-15\\
353 1.09231046378957e-15\\
354 1.07630217583652e-15\\
355 1.04336872251893e-15\\
356 1.02879715661698e-15\\
357 1.00929804473092e-15\\
358 9.7495379792071e-16\\
359 9.37478937407565e-16\\
360 9.23634426080908e-16\\
361 9.05752765255355e-16\\
362 8.80052494521279e-16\\
363 8.64417961459509e-16\\
364 8.44562774165038e-16\\
365 8.30659505899575e-16\\
366 8.01401604611504e-16\\
367 7.84253680198864e-16\\
368 7.59416611834546e-16\\
369 7.47831645377325e-16\\
370 7.22365379396091e-16\\
371 7.06698251958441e-16\\
372 6.96040288326628e-16\\
373 6.74042645951504e-16\\
374 6.44772161945835e-16\\
375 6.31372938541487e-16\\
376 5.94889800459994e-16\\
377 5.80385176035403e-16\\
378 5.5516643360077e-16\\
379 5.31190354441509e-16\\
380 5.15445612551447e-16\\
381 4.95398957503591e-16\\
382 4.71427138202624e-16\\
383 4.42797777047703e-16\\
384 4.25667544405674e-16\\
385 4.14003375747016e-16\\
386 4.04113827858262e-16\\
387 3.81093296424355e-16\\
388 3.64146415369817e-16\\
389 3.43335810601689e-16\\
390 3.15687630730482e-16\\
391 3.04132458394402e-16\\
392 2.61932606754952e-16\\
393 2.38594712011038e-16\\
394 2.03790083156174e-16\\
395 1.71738790919171e-16\\
396 1.69245914299688e-16\\
397 1.51051096675349e-16\\
398 1.22201371100045e-16\\
399 1.06099499080366e-16\\
400 8.16167269072216e-17\\
401 3.95619830132959e-17\\
};
\addplot [
color=black,
dotted,
forget plot
]
table[row sep=crcr]{
183 1e-18\\
183 10\\
};
\addplot [
color=black,
dotted,
forget plot
]
table[row sep=crcr]{
236 1e-18\\
236 10\\
};
\end{axis}
\end{tikzpicture}%

%%% Local Variables:
%%% mode: latex
%%% TeX-master: "../paper2d"
%%% End:
              };
      \node  (inter) at (3.6,-0.3) {$I_{1-\epsilon}$
            };
      \node  (inter2) at (-0.4,-0.3) {$I_{\chi}$
            };
      \node  (inter3) at (-2.0,-0.3) {$I_\epsilon$
};
    \end{tikzpicture}
    \caption{The subdivision of the spectrum of $A$ into three distinct
      intervals, with cutoff parameter $\tau=1e-14$. The singular values cluster near $1$ and $0$, and there is a plunge region in between. Due to rounding errors, the
      eigenvalues in region $I_\epsilon$ don't decay past machine precision.}
\label{fig:intervals}
\end{figure}
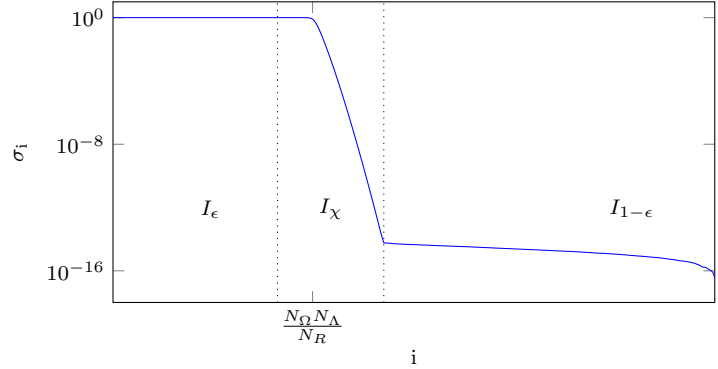

Still, matrix $A$ has a lot of structure that we set out to exploit. The singular values have a very distinct profile, shown in
\cref{fig:intervals}. There is a region $I_{1-\epsilon}$ of singular values that are close to $1$ up to
a small value $\epsilon$, but never exceeding it. Its size, as proven in \cref{sec:spectrum}, is approximately
$\frac{\ndomtp\ndomfp}{\nbboxtp}$. There is a similar region $I_\epsilon$ that contains singular values smaller than $\epsilon$. Inbetween there is a \emph{plunge region} $I_\chi$,
which contains singular values between $\epsilon$ and $1-\epsilon$.

Denote by $\Neps$ the size of the plunge region $I_\chi$,
\begin{equation}
  \label{eq:20}
\Neps=  \#\{\sigma_k:\epsilon<\sigma_k<1-\epsilon\}.
\end{equation}
For the 1D discrete Fourier extension problem this quantity is known to grow
slowly with $\nbboxtp$ \cite{Wilson1987}
\begin{equation}
  \label{eq:26}
  \Neps=O(\log(\nbboxtp)).
\end{equation}
Intuitively, one could state that just $\Neps$ of the singular values capture
almost all the ill-conditioning present in $A$.
This observation led to an algorithm for \cref{eq:lsq} that
is $O(\nbboxtp \Neps^2)=O(\nbboxtp\log^2(\nbboxtp))$ \cite{Matthysen2015}, which we now recall in some detail. Note that due to our definitions, $\ndomfp$ asymptotically grows proportionally to $\nbboxtp$, hence the complexities in terms of $\ndomfp$ and of $\nbboxtp$ are equivalent.

The algorithm of \cite{Matthysen2015} solves the system \cref{eq:lsq} under the assumption that
a solution with sufficiently small norm $\Vert x \Vert$ and small residual $Ax-b$ exists. We refer to \cite{Adcock2012} for conditions on the existence of these solutions in the 1D Fourier extension case, and \cite{Adcock2016} for a detailed treatment in the general context of frames.

\begin{algorithm}[htbp]
\begin{algorithmic}
\STATE Solve $PAy=Pb$ for $y$, with $P = AA' - I$
\STATE $z=A'(b-Ay)$
\STATE $x=y+z$
\end{algorithmic}
\caption{Fourier frame approximations in $O(\ndomfp\Neps^2)$ operations.}
\label{alg:dpss2}
\end{algorithm}

The algorithm is simple to state, and it is included schematically in \cref{alg:dpss2}. Its steps do require some more explanation:
\begin{enumerate}
\item The $\Neps$ singular values in the middle region can be isolated by
  multiplying both $A$ and $b$ with a matrix $P = AA'-I$. This multiplication maps the singular values $\sigma$ of $A$ to $\sigma^3-\sigma$, since if $A = U \Sigma V'$ then $(AA'-I)A = U (\Sigma^3-\Sigma) V'$. This effectively removes all singular values close to either one or zero. As a result, the linear system
  \begin{equation}
    \label{eq:28}
    PAy=Pb
  \end{equation}
has numerical rank $\chi(\epsilon_{mach},\nbboxtp)$, and a solution
$y$ can be obtained in $O(\nbboxtp\log^2(\nbboxtp))$ operations, e.\ g.\ through
randomized algorithms \cite{Liberty2007}.

\item The partial solution vector $y$ has a residual $r = b - Ay$. If we find a vector $z$ such that $Az = r$, then $x = y+z$ solves the overall problem. Indeed, in that case $Ax = Ay + Az = b - r + r = b$.

\item Crucially, the problem for $z$ is simple to solve, $z = A' r = A' (b-Ay)$. If an accurate solution exists, then $r$ must lie in the column space of $A$. Since $r$ is by construction orthogonal to the column space associated with $I_\chi$, it must in fact lie in the space associated with $I_{\epsilon}$. Yet, since the corresponding singular values are close to $1$, in this subspace the inverse of $A$ is well approximated by its adjoint $A'$.
\end{enumerate}

The correctness of the algorithm is shown formally in \cite{Matthysen2015}. For the purpose of the present article, here we make the following observations:
\begin{itemize}
 \item A fast matrix-vector product with $A$ and $A'$ is available in any dimension, regardless of the shape of the domain $\domt$.
 \item Algorithm 1 is purely algebraic and applies to any system $Ax=b$, as long as $A$ has a singular value profile similar to the one shown in \cref{fig:intervals}, exhibiting a plunge region from $1$ to $0$.
 \item The computational cost of the algorithm depends quadratically on the size of the plunge region: it is $O(\ndomfp\Neps^2)$ operations.
\end{itemize}

We show numerical results using Algorithm 1 for a variety of domains in
\cref{sec:experiments}. Compared to its univariate implementation described in \cite{Matthysen2015}, the single conceptual complication lies in the suitable identification of the sampling sets as illustrated in \cref{fig:samplepoints}. Mathematically, however, determining the size of the plunge region -- and with it the computational complexity of our algorithm -- is significantly more involved. The next section contains a brief historical context of the literature on the asymptotic behaviour of $\Neps$, as there has been quite some interest in the equivalent continuous problem.
%%% Local Variables:
%%% mode: latex
%%% TeX-master: "paper2d"
%%% End:

% Singular value distribution
\section{Spectrum of the collocation matrix}

\label{sec:spectrum}
\subsection{One dimensional bandlimited extrapolation}

We begin this section by highlighting the close interconnection between the
Fourier frame approximation problem and that of bandlimited extrapolation in classical
signal processing literature. Much attention has been given to the problem of
extending or extrapolating a function that is known to be bandlimited, from limited
data. The ingredients of the two problems, a truncated basis and limited data,
are the continuous equivalents of \cref{fig:samplepoints}, and as such it is not surprising that the methods have a lot
in common.

The theory on bandlimited extrapolation was pioneered in a series of
papers by Slepian and collaborators in the 1960s and 1970s \cite{Slepian1978a}
\cite{Landau1961} \cite{Pollack1961} \cite{Pollack1961a}. Specifically, they
studied the integral equation 
\begin{equation}
  \label{eq:integralequation}
\lambda_i\psi_i(s)=  \int_{\Omega}\psi_i(t)\frac{\sin W(t-s)}{\pi(t-s)}dt,
\end{equation}
whose solutions are called \emph{prolate spheroidal wave functions}. This equation corresponds to the question \emph{To what extent $\lambda_i$ can
  a function be concentrated both in the time- and frequency
  domain?}. Indeed, in this equation the function has finite bandwidth $W$ and
the equation expresses that a fraction $\lambda_i$ of the energy of $\psi_i$ is
contained in the subdomain $\Omega\subset\mathbb{R}$. In what is known as the uncertainty principle, or the Gabor limit, they showed that
the eigenvalues approach but never exactly equal $1$, and so
when properly ordered,
\begin{equation}
  \label{eq:2}
  1 > \lambda_1 > \lambda_2 > \dots >0.
\end{equation}
Moreover, the eigenvalues cluster near 1 and zero as $W\to 0$, or equivalently, as
the frequency limit increases. That is, for any small $\epsilon$, the number of
eigenvalues between $1-\epsilon$ and $\epsilon$ grows like $O(\log{W^{-1}})$.

Following up on this result, Slepian defined and
proved similar results for a discrete version of the prolate spheroidal wave
functions \cite{Slepian1978}. In this case the frequency domain is sampled at regular intervals,
and the sinc kernel is replaced by a Dirichlet kernel. \Cref{eq:integralequation}
becomes 
\begin{equation}
  \label{eq:3}
  \lambda_i\psi_i(s)=  \int_{\Omega}\psi_i(t)\frac{\sin W(t-s)}{\sin(\pi(t-s))}dt.
\end{equation}
The eigenfunctions and corresponding Fourier series coefficients are optimally
concentrated in a continuous time and discrete frequency domain, or vice
versa. The clustering property was proven as well.

When both time and frequency domain are discrete, the resulting sequences viewed
on their domain of restriction become finite vectors \cite{Jain1981}. Equation (\ref{eq:2})
becomes a difference equation \cite{Xu1984}:
\begin{equation}
  \label{eq:4}
\sum_{n=0}^M\frac{\sin((2K+1)(m-n)\pi/N)}{N\sin{((m-n)\pi/N)}}
\psi_i[n]=\lambda_i\psi_i[n].
\end{equation}
These \emph{periodic discrete prolate spheroidal sequences $\psi_i[n]$} where later proven
by Wilson to have the same asymptotic scaling of $\eta(\epsilon,N)$ as their continuous
counterparts \cite{Wilson1987}. This result is sufficient to establish the asymptotic complexity of
\cref{alg:dpss2} in one dimension.

Another generalization of the results by Slepian and his collaborators came by
viewing \cref{eq:integralequation} as a special case of a more general \emph{Wiener-Hopf}
operator $(T_\alpha \psi)(\bsx)$
\begin{equation}
  \label{eq:5}
  (T_\alpha \psi)(\bsx)=\left(\frac{\alpha}{2\pi} \right)^D
  \chi_\Lambda(\bsx)\int_\Omega\int_{\Lambda}e^{i\alpha\bsxi\cdot(\bsx-\bsy)}\psi(\bsy)d\bsy
  d\bsxi, \alpha>0.
\end{equation}
This operator in $L^2(\mathbb{R}^d)$ can be reduced to \cref{eq:integralequation} by
taking $\Lambda$ and $\Omega$ intervals in $\mathbb{R}$ and setting $\alpha \sim W^{-1}$. 
Due to the characteristic functions of $\Omega$ and $\Lambda$, these operators have a
discontinuous symbol. Considerable effort
has gone into describing the spectral properties of these operators. 
Starting with Slepian and Pollack \cite{Pollack1961}, the eigenvalue
distribution has been deduced from the trace of functions of the
operators. They showed that for (\ref{eq:integralequation})
\begin{equation}
  \label{eq:7}
  \lim_{\alpha\to\infty} \mbox{Tr}( T_\alpha) = \lim_{\alpha\to\infty}\sum_i^\infty
  \lambda_{\alpha,i} = C_0\alpha+O(1) 
\end{equation}
and 
\begin{equation}
  \label{eq:8}
\lim_{\alpha\to\infty}  \mbox{Tr} (T_\alpha^2) = \lim_{\alpha\to\infty}\sum_i^\infty \lambda_{\alpha,i}^2 = C_0\alpha+C_1\log{\alpha}+O(1).
\end{equation}
Combining these traces with (\ref{eq:2}) a combinatorial argument (which we will return to in Theorem \ref{thm:combinatorial}) shows that $\eta(\epsilon,\alpha)$ only grows as
$\log\alpha$. This technique, utilizing the trace of $T_\alpha-T_\alpha^2$, was later used by Landau and Widom to prove equivalent results when $\Omega$ consists of a finite number of distinct intervals \cite{Landau1980}.

\subsection{Multi-dimensional extensions}

While the one dimensional Prolate Spheroidal Wave functions received considerable interest
in signal processing and mathematics \cite{Osipov2013}, the generalization to multiple
dimensions is not straightforward. Most generalizations are restricted to a setting where $\Omega=\Lambda$, or require at least some structure in both time and frequency domains. In contrast, the most general multidimensional equivalent of (\ref{eq:5}) would be for arbitrary `frequency'
and `time' domains.

Multidimensional equivalents of PSWFs were first considered by Slepian and
Pollack \cite{Pollack1961a}. They proved a double orthogonality property similar to
the one dimensional case, and an eigenvalue distribution as in \cref{eq:2}.
Afterwards, they focused only on the most symmetric case, where both $\Omega$ and
$\Lambda$ are circular. In this case, the symmetry of the problem leads to PSWF
generalizations as a combination of Bessel functions and one-dimensional
PSWFs. Later results were described for rectangular time and frequency domains
\cite{Borcea2008}, or circular frequency regions \cite{Simons2006}. For an overview, see \cite{Simons2011}.

Results on spectral properties for arbitrarily shaped regions appeared in 1982 \cite{Widom1982},
when H. Widom stated a conjecture on the traces of functions of Wiener-Hopf operators
with discontinuous symbols in
higher dimensions. He conjectured that an operator $T_\alpha$ as in \cref{eq:5}
for higher-dimensional $\Omega$ and $\Lambda$ would obey the trace relation
\begin{equation}
  \label{eq:6}
\lim_{\alpha\to\infty}  \mbox{Tr}(T_\alpha-T_\alpha^2) = \alpha^{d-1}\log{\alpha}\mathcal{W}_1(\delta\Lambda,\delta\Omega)+o(\alpha^{d-1}\log{\alpha}).
\end{equation}
Combined with
\begin{equation}
  \label{eq:11}
  \mbox{Tr}(T_\alpha) = \left( \frac{\alpha}{2\pi}\right)^d\int_\Omega\int_\Lambda d\bsxi d\bsx
\end{equation}
this yields a plunge region that grows at least one order slower in
$\alpha$ than the region of ones (up to a log-factor). Moreover, the constant
% \begin{equation}
%   \label{eq:9}
%   \mathcal{W}_0(\Lambda,\Omega)=\frac{1}{2\pi}^d\int_\Lambda\int_\Omega d\bsxi d\bsx
% \end{equation}
% and 
\begin{equation}
  \label{eq:10}
  \mathcal{W}_1(\Lambda,\Omega) =
  \frac{1}{2(2\pi)^{d+1}}\int_{\delta\Lambda}\int_{\delta\Omega}|\bsn_{\delta\Lambda}(\bsx)\cdot\bsn_{\delta\Omega}|d\bsxi d\bsx
\end{equation}
is dependent only on the geometry of the domains $\Omega$ and
$\Lambda$. This conjecture was proven in 2010 by Sobolev \cite{Sobolev2010} for arbitrary
smooth domains, and the proof was later extended to piecewise continuous
domains \cite{Sobolev2015}.

\subsection{Generalizing discrete Prolate Spheroidal wave sequences}
% The specifics of the problem outlined in \cref{sec:problem} are concerned with a
% general domain $\domt$ coupled with a rectangular region $\domf$.
% Indeed, for a Fourier series, the frequency region of
% interest is always rectangular. This simplifies matters considerably, the matrix
% product $AA'$ now has a straightforward closed form for it's elements. This is a
% Hermitian matrix, for which the eigenvalues are the
% squares of the singular values of $A$.

In light of the work by Slepian et.\ al., and as already alluded to in
\cref{sec:problem} of this paper, the matrix $A$ in our fully discrete setting
can be seen as the composition of three operations in time and frequency:
extending $\domf$ to $\bboxf$ by zeros in the frequency domain, applying a
discrete Fourier transform and restricting the result to $\domt$ in the time
domain. We shall develop this notion more formally.

We introduce several operators, which operate on sequences of length $\nbboxtp$
on an $\singletp\times\singletp\times\dots$ grid. For indexing purposes we
convert \cref{eq:19} to the integer sets
\begin{equation} 
\ibboxtp=\singletp\domtp\qquad\mbox{and}\qquad \idomtp=\singletp\domtp.
\end{equation}
We denote by $T_\domt$ the discrete space-limiting operator that sets all values outside $\domt$ to zero,
\begin{equation}
  \label{eq:37}
  (T_\domt)_{\bsk,\bsl} = \left\{ \begin{array}{cc} 1, & \bsk=\bsl\in\idomtp, \\ 0 & \mbox{otherwise}.\end{array} \right.
\end{equation}
Similarly, the discrete operator $B_\domf$ is an $\nbboxfp \times \nbboxfp$
bandlimiting operator that eliminates all frequency content outside
$\domf$. With $F$ the $D$-dimensional Fourier transform, $B_\domf=FT_\domf F^{*}$. 

With these definitions, the matrix $AA'$ is the nonzero subblock of the operator $\TBT$. Similar to the univariate case in \cite{Matthysen2015}, the entries of $\TBT$ are given in terms of a convolution kernel
\begin{equation}
  \label{eqn:spacelimiting}
  (\TBT)_{\bsk,\bsl}=B(\bsk-\bsl), \qquad\forall \bsk,\bsl\in\idomtp
\end{equation}
where in the multivariate case $B$ is a product of univariate Dirichlet kernels,
  \begin{align}
\label{eqn:bandlimiting}
    B(\bsk) &= \prod_{d=1}^Db(k_d)\\
b(k) &=\frac{\sin(\pi \singlefp k/\singletp)}{\singletp\sin(\pi k/\singletp)}.
  \end{align}
Here, $\bsk=(k_1,k_2,\dots)$ can be a multidimensional point. Recall that $D$ is the number of
dimensions. 

Denote the eigenvectors of the related Hermitian matrix $\BTTB$ by $\phi_i$ and
by $\hat{\phi_i}=T_\domt\phi_i$ the eigenvectors of $\TBT$. The corresponding
eigenvalues of both matrices are the same and denoted by $\lambda_i$. Similar to \cite{Jain1981,Xu1984,Wilson1987}, the following
properties can be shown:
\begin{enumerate}
\item The eigenvalues are bounded above by $1$ and below by $0$.
\item The rank of $\TBT$ and of $\BTTB$ is $\min(\ndomfp,\ndomtp)$.
\item If $\ndomfp<\ndomtp$, the $\phi_i$ are complete in the space of sequences
  bandlimited in $\domf$.
\item
\label{prop:doubleorthogonality}
Define the discrete inner products $\langle  \phi_i,\phi_j\rangle_\bboxt=\phi_i\cdot\phi_j$ and
  $\langle  \phi_i,\phi_j\rangle_\domt=(T_\domt\phi_i)\cdot(T_\domt\phi_j)$. The $\phi_i$ are doubly
  orthogonal with respect to these inner products,
\begin{equation*}
\langle  \phi_i,\phi_j\rangle_\bboxt=\delta_{ij}, \qquad \langle \phi_i, \phi_j\rangle_\domt=\lambda_i\delta_{ij}.
\end{equation*}
\item 
\label{prop:eigenfft}
The $\phi_i(\domt,\domf)$ are eigenvectors of the $D$-dimensional DFT, with
  $\domt$ and $\domf$ interchanged.
  \begin{equation}
    \label{eq:33}
    T_{\domf}F\phi_i(\domt,\domf)=\hat{\phi_i}(\domf,\domt),
  \end{equation}
where $F$ is the $D$-dimensional DFT matrix.
\item Consider the norms corresponding to property \ref{prop:doubleorthogonality} $\Vert \cdot\Vert_\bboxt$ and $\Vert\cdot\Vert_\domt$.
Then among all multidimensional sequences of size $\nbboxtp$ with frequency
support in $\domfp$, $\phi_1$ is the one most concentrated in $\domtp$ with
concentration $\langle \phi_1,\phi_1 \rangle_\domt/\langle  \phi_1,\phi_1
\rangle_\bboxt=\lambda_1$. Similarly, among the sequences of equal frequency support orthogonal to $\phi_1$, $\phi_2$ is the most concentrated in $\domt$. 
\end{enumerate}

\begin{figure}[hbtp]
  \centering
  \subfloat[$\lambda_{60}\sim 1-10^{-10}$]{\includegraphics[scale=0.15]{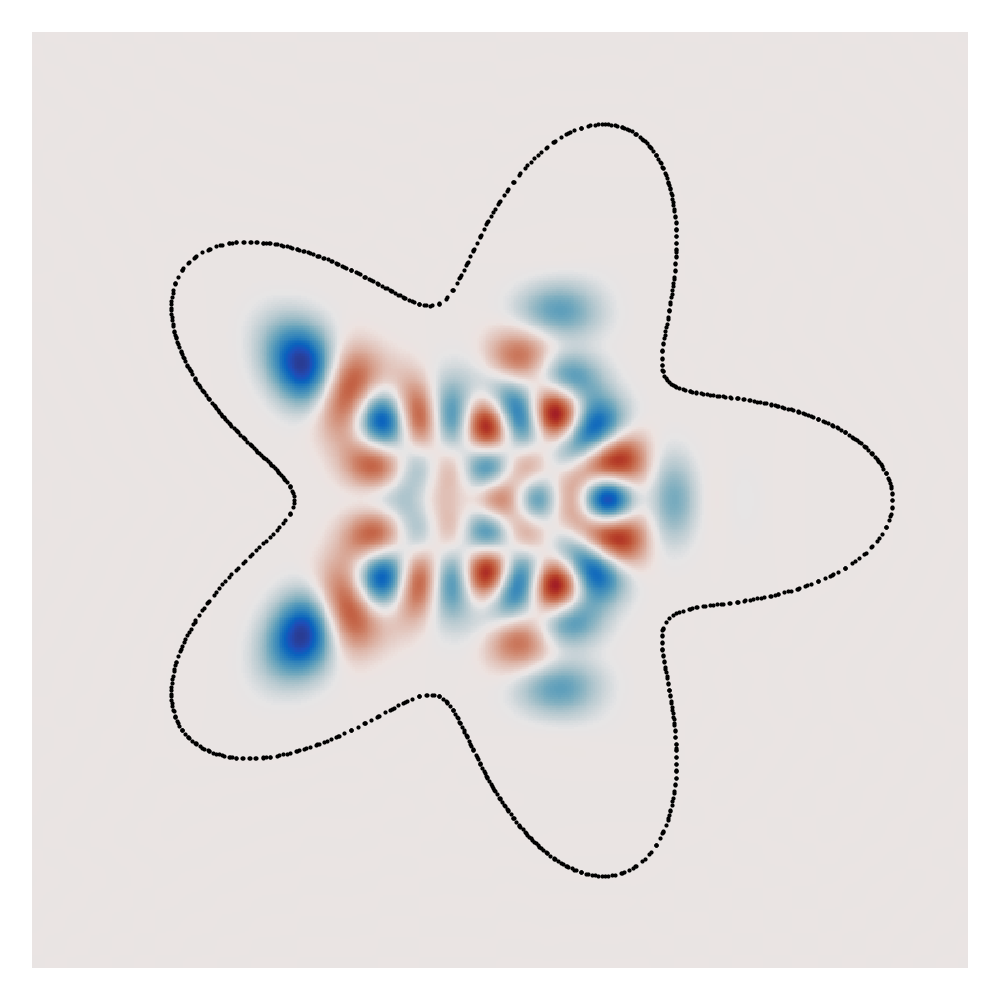}}
  \subfloat[$\lambda_{557}\sim 0.54$]{\includegraphics[scale=0.15]{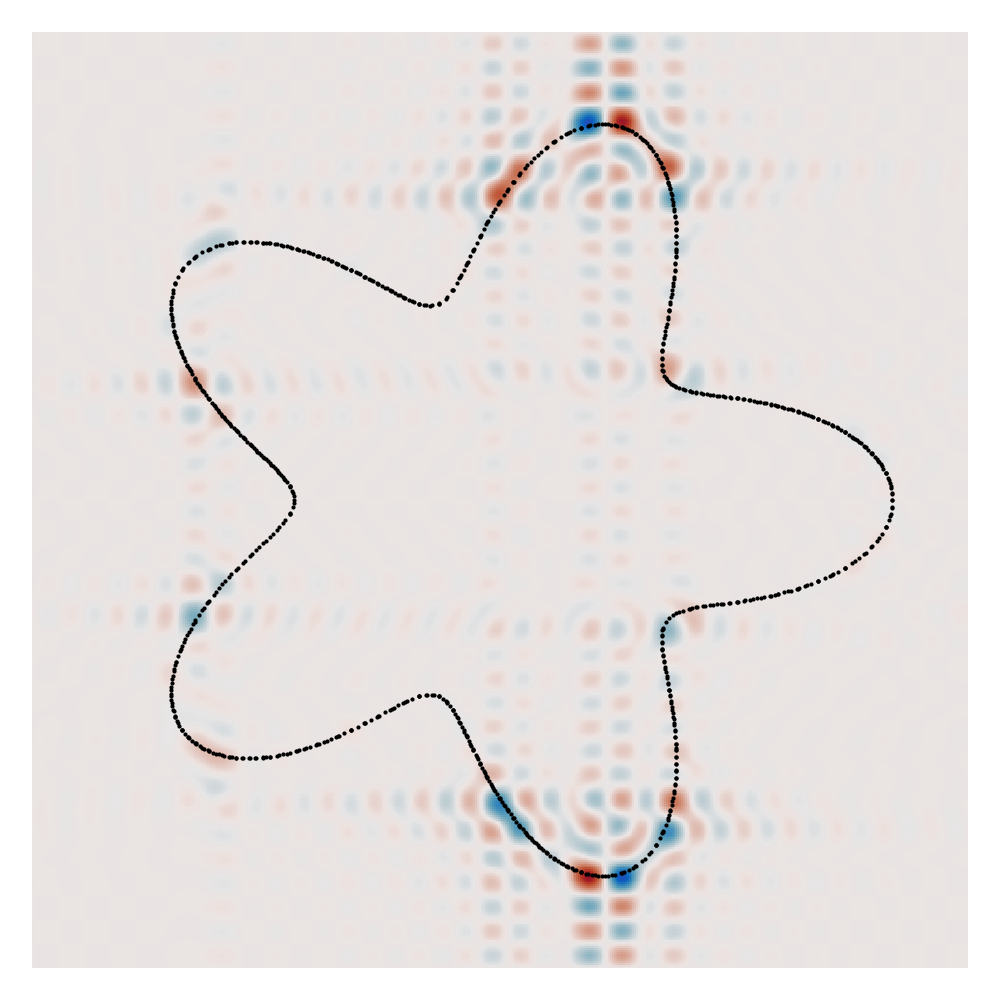}}
  \subfloat[$\lambda_{1300}\sim 10^{-10} $]{\includegraphics[scale=0.15]{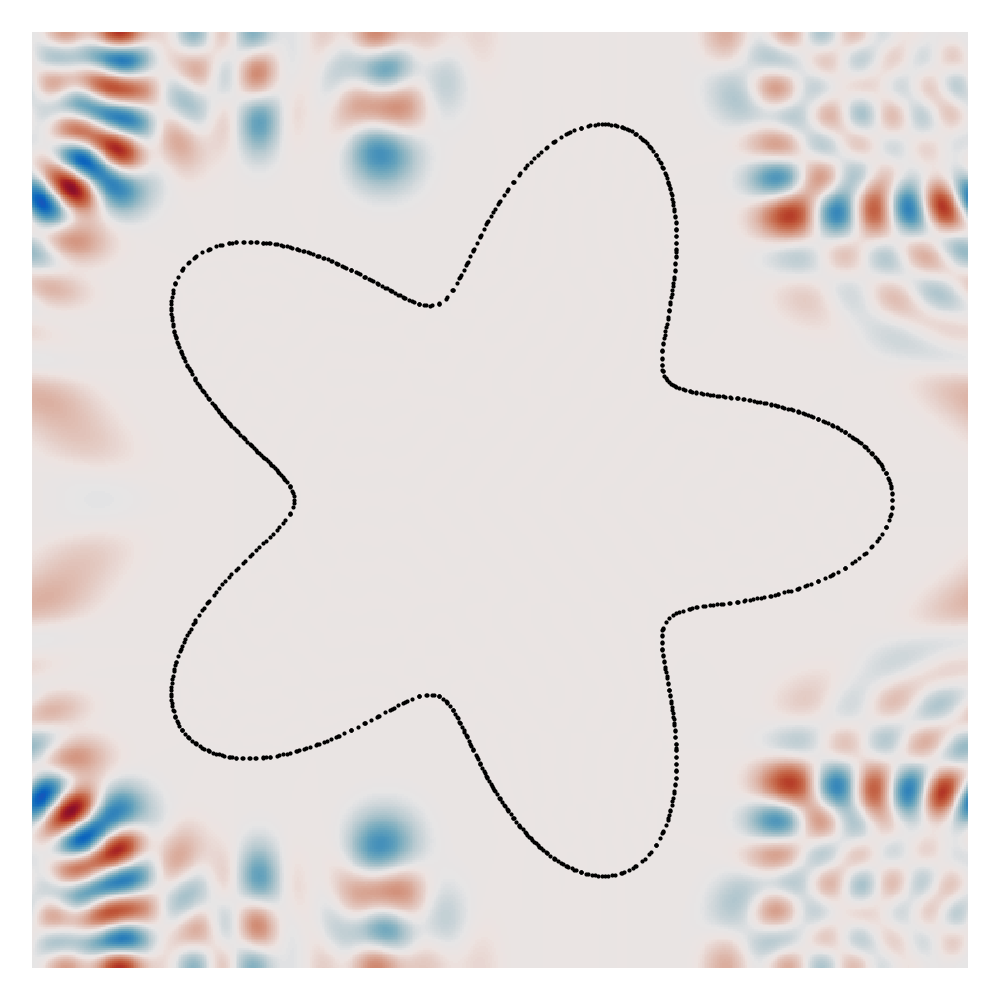}}
  \caption{Fourier series corresponding to periodic discrete prolate spheroidal
    wave sequences $\phi_i$ for different values of the eigenvalue $\lambda_i$. }
  \label{fig:multidpss}
\end{figure}

Let us interpret these properties and make the connection to the algorithm. Recall that the matrix $A$ has a particular singular value structure. The left and right singular vectors of $A$ are the eigenvectors of $AA'$ and of $A'A$ respectively, i.e. they are $\hat{\phi_i}(\domt,\domf)$ and $\phi_i(\domf,\domt)$, with singular values $\sqrt{\lambda_i}$. The singular vectors are, by construction, the periodic discrete prolate spheroidal wave sequences. The vectors can be seen as coefficients of a Fourier series and some of the corresponding functions are shown in \cref{fig:multidpss}.

The maximal ratio $\lambda_1 = \Vert \phi_1 \Vert_{\domf} / \Vert \phi_1 \Vert_{\bboxt}$ with $\lambda_1 \approx 1$ means that $\phi_1$ is almost entirely supported on $\domf$ -- this in spite of being compactly supported in the (discrete) frequency domain. They are, after all, a finite Fourier series. Such a function is shown in the left panel of \cref{fig:multidpss}. In contrast, the functions corresponding to small eigenvalues are almost entirely supported on the exterior domain $\bboxt - \domt$, as shown in the right panel of the figure. Finally, the middle functions with eigenvalues in the plunge region are supported everywhere. This is illustrated in the middle panel. In particular, these functions are the only ones that are non-neglible in a neighbourhood of the boundary. This is a clear indication that the plunge region is a phenomenon that relates to the boundary of the domain at hand.

The solution to $Ax=B$ using a truncated Singular Value Decomposition can be expressed in terms of these generalized discrete Prolate Spheroidal sequences,
\begin{equation}
  \label{eq:38}
  \bsa = \sum_{i=1}^{i_{max}}\frac{1}{\sqrt{\lambda_i}}\hat{\phi_i}(\domf,\domt) \langle f,\hat{\phi_i}(\domt,\domf) \rangle
\end{equation}
where $i_{max}$ is determined by the truncation parameter $\epsilon$ and is such that $\lambda_{i_{max}}\geq\epsilon>\lambda_{i_{max}+1}$. This expression, combined with \cref{fig:multidpss}, clearly illustrates the different steps in Algorithm \ref{alg:dpss2}. This is shown further in \cref{fig:stages}.

\begin{figure}[hbtp]
  \centering
  \subfloat[$b$]{\includegraphics[scale=0.15]{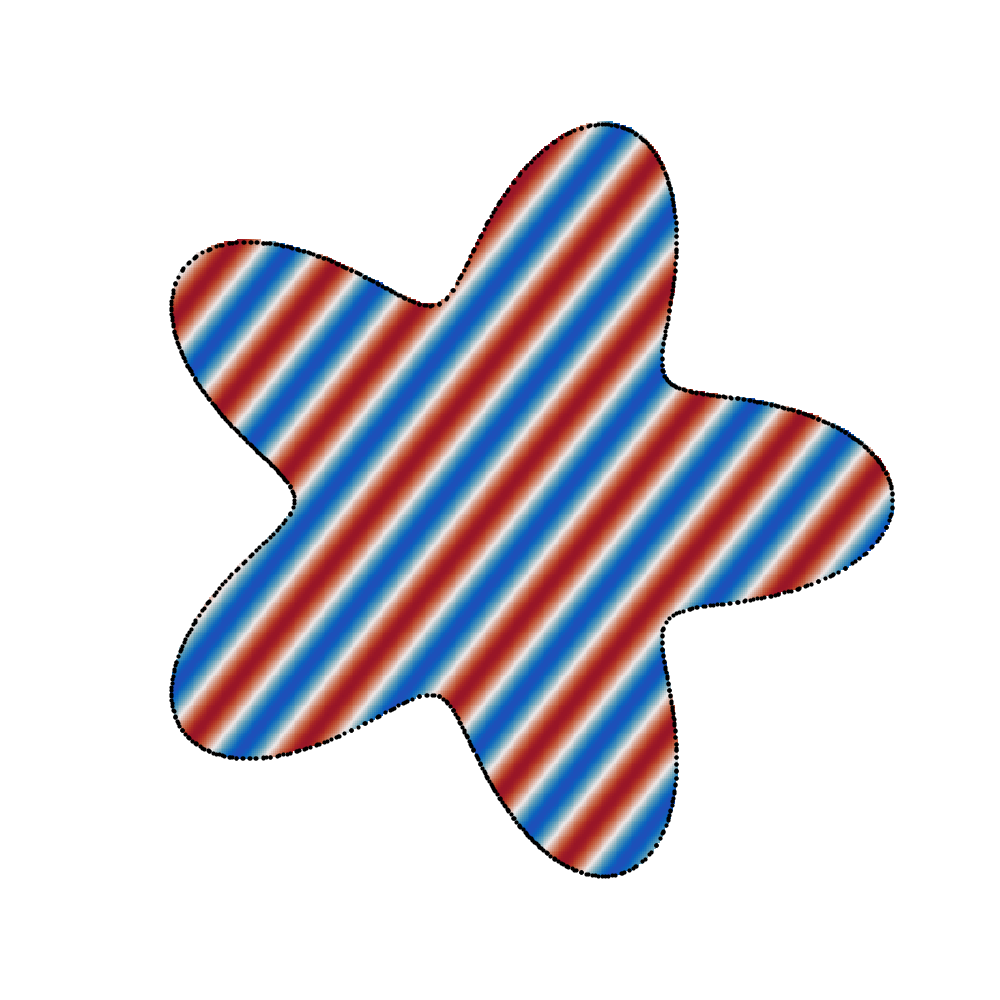}\label{fig:stage0}}
  \subfloat[$Ay, y=V_\chi\Sigma_\chi^{-1}U_\chi'b$]{\includegraphics[scale=0.15]{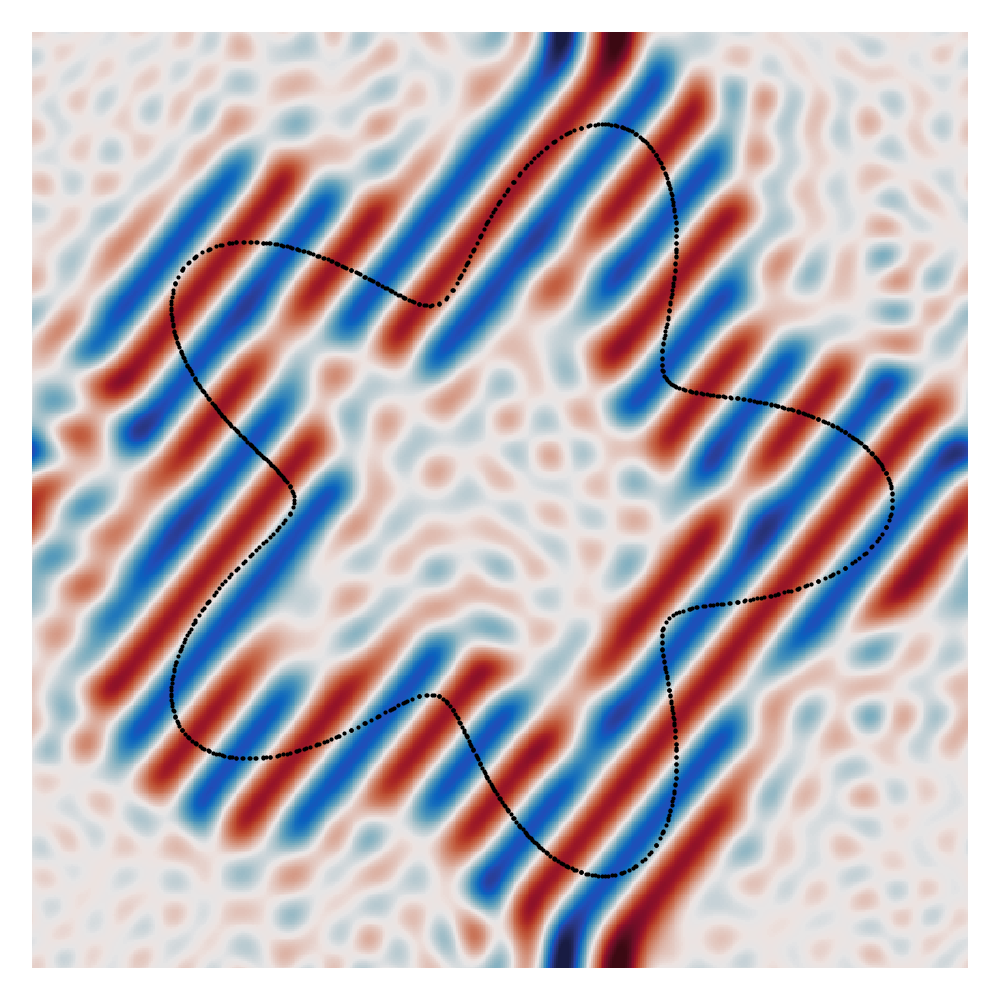}\label{fig:stage1}}
  \subfloat[$Az, z=A'(b-Ay)$]{\includegraphics[scale=0.15]{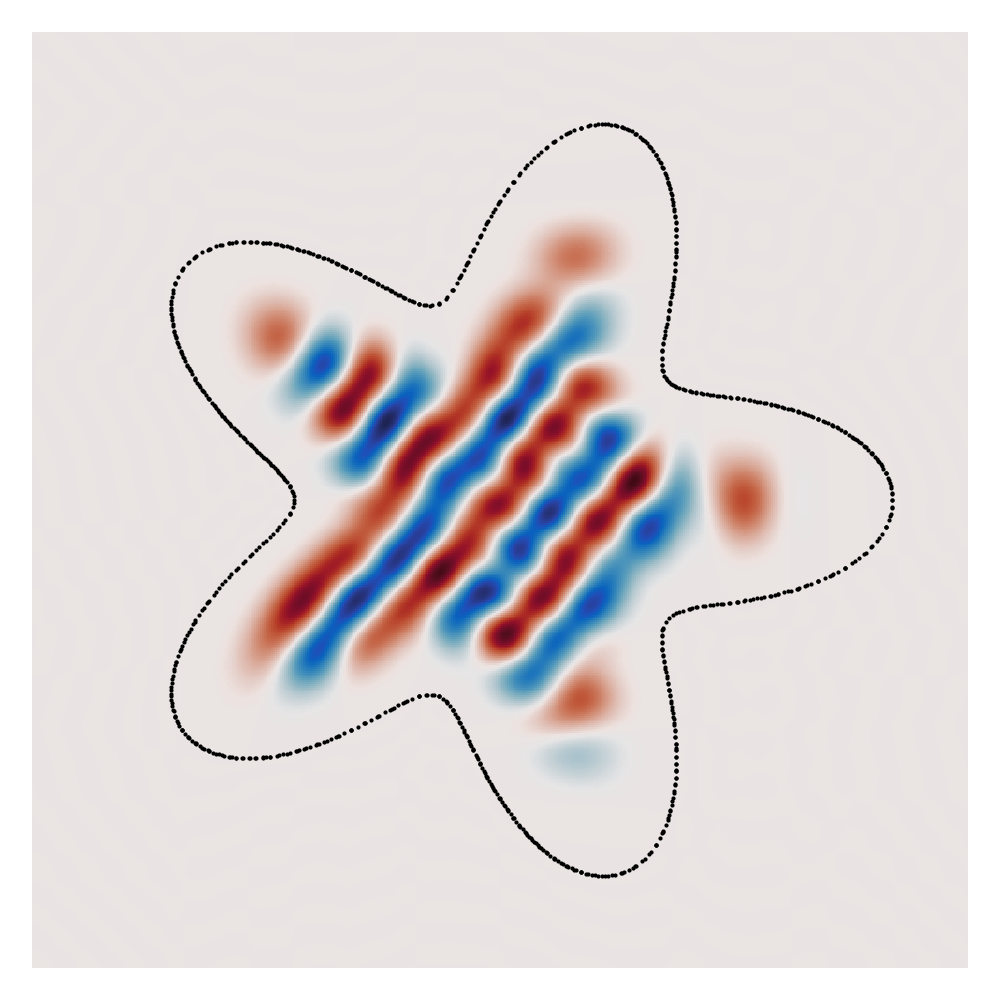}\label{fig:stage2a}}
  \subfloat[$A(y+z)$]{\includegraphics[scale=0.15]{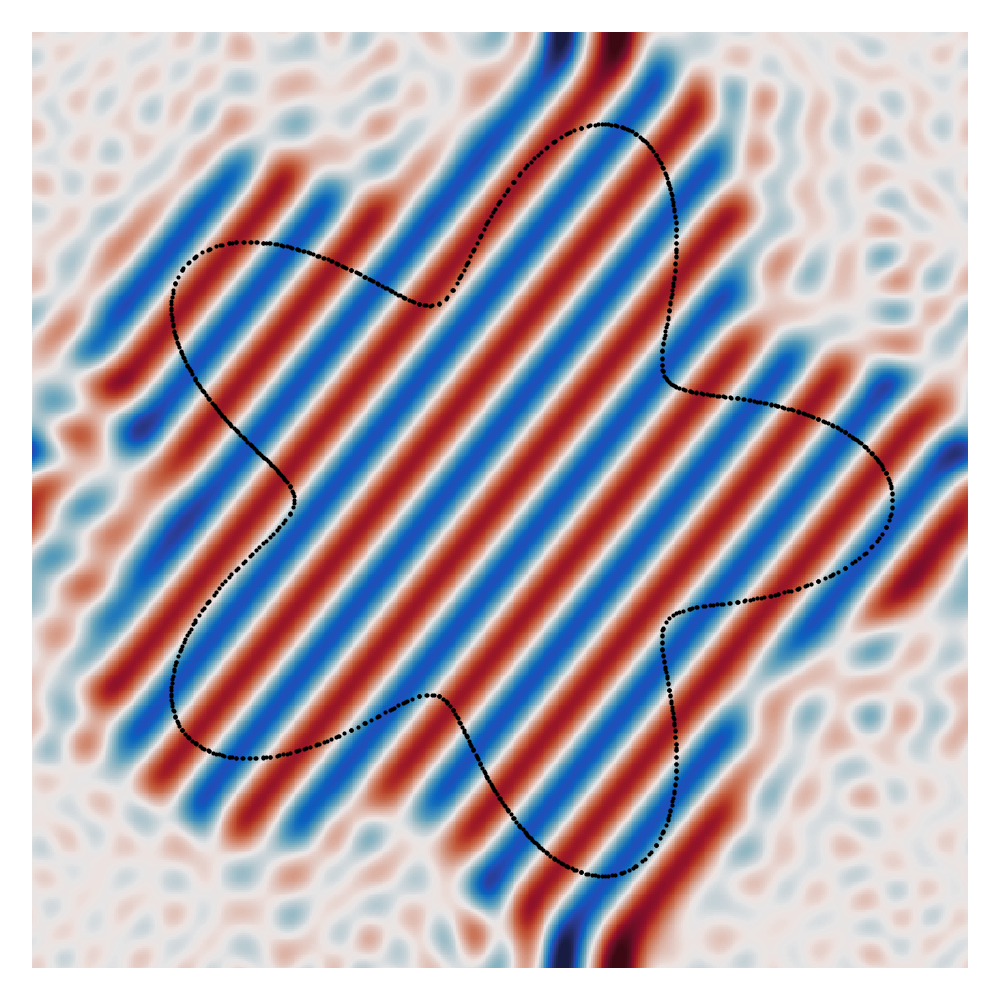}\label{fig:stage2}}
  \caption{Steps in algorithm \ref{alg:dpss2}: Data is given on $\domt$ (\cref{fig:stage0}),
    approximated using the eigenvalues $1-\epsilon>\lambda_i>\epsilon$, and yields
    a good approximation on the boundary (\cref{fig:stage1}). This solution subtracted from the data
(\cref{fig:stage2a}) is easily approximated by a regular Fourier series on the
bounding box (\cref{fig:stage2}). }
  \label{fig:stages}
\end{figure}

The vector $y$ found after the first step is based on the middle singular values, which correspond to functions that are supported along the boundary of the domain. The Fourier series with $y$ as its coefficients is shown in \cref{fig:stage1}: it approximates the data well in a neighbourhood of the boundary. Subtracting this approximation from the original function (as in $b-Ay$) yields a function that vanishes smoothly towards the boundary of $\domt$. Hence, this function can be extended by zero and approximated efficiently with a Fourier transform, and that is expressed by the step $z = A'(b-Ay)$. The vector $z$ is a linear combination of the prolates that are concentrated in the interior of the domain. It is now also clear what the null space of $A$ corresponds to: it consists of linear combinations of the prolates concentrated in the exterior of the domain. Any such prolate can be added to our solution but it will only affect the extension, not the approximation on $\domt$ itself, unless it is multiplied with a very large coefficient.

\subsection{Singular value profile for generalized discrete Prolate Spheroidal
  Sequences}
\label{sec:proof}
Proving asymptotic complexity of algorithm \ref{alg:dpss2} needs a bound on
$\Neps$ as $\ndomfp$ increases. As in \cite{landau,Wilson1987}, this can be inferred from trace iterates of the
operator $\TBT$. After bounding the difference between $tr(\TBT)$ and $tr((\TBT)^2)$, this bound is shown to be of the same order
as $\Neps$. We formulate our final result in Theorem \ref{thm:combined}.

Our bound hinges on two observations:
\begin{itemize}
\item The contribution of a single point in $\domtp$ to $tr(\TBT)-tr((\TBT)^2)$ is inversely proportional to
  the distance between that point and the domain boundary.
\item The number of points at a certain distance from the boundary is bounded by
  the number of boundary points and some terms depending only on domain geometry.
\end{itemize}
The next section contains a concise illustrated proof of the second
observation in the two-dimensional case. The first observation is proven in \Cref{sec:actualproof}. Due to the discrete nature of the problem, we use some concepts known in digital topology \cite{Chen2014}.

\subsubsection{Distance away from the boundary for general 2D domains}
\label{sec:distancebound}

For reasons that will become clear later, the metric of choice is the $l_\infty$
distance,
\begin{equation}
  \label{eq:41}
  d(\bsk,\bsl) = ||\bsk-\bsl||_\infty.
\end{equation}
A point $\bsk$ on a regular grid in two dimensions can have up to $8$ neighbors at a distance $1$. We also assign to each point $\bsk$ a $L_\infty$ distance to the boundary of a point set $P$ (or rather to its exterior),
  \begin{equation}
    \label{eq:1}
    d(\bsk; {})=\underset{\bsl\notin P}{\min} ||\bsl-k||_\infty.
  \end{equation}
Evidently it is true that $\forall \bsk \notin P: d(\bsk; P)=0$, and
\begin{equation}
  \label{eq:30}
  \forall \bsk \in P :   d(\bsk;P) = \left( \underset{\bsl:d(\bsl,\bsk)=1}{\min}d(\bsl;P)\right)+1.
\end{equation}
Next, let $S_i$ denote the points in set $S$ that are a distance $i$ away from the boundary,
\begin{equation}
  \label{eq:45}
  S_i=\{\bsk \in S : d(\bsk; P) = i \}.
\end{equation}
The main result of this section is a bound on the size of these sets, in particular of $|S_{i+1}|$ in terms of $|S_i|$, which
can be obtained using results from digital topology.

Let $\bar{S}_i$ denote the points in set $S_i$ that have no neighbour in $S_{i+1}$
\begin{equation}
  \label{eq:46}
  \bar{S}_i = \{\bsk \in S_{i} :
  \underset{\bsl:d(\bsl,\bsk)=1}{\max}d(\bsl;S)\leq i\}.
\end{equation}
These definitions are illustrated in \Cref{fig:visualillustration}. For example, in the left panel the solid black dots not connected by a line belong to $\bar{S}_1$: their neighbours are either also in $S_1$ or in the exterior of the domain. The black dots connected by a line make up $S_1 \setminus \bar{S}_1$.

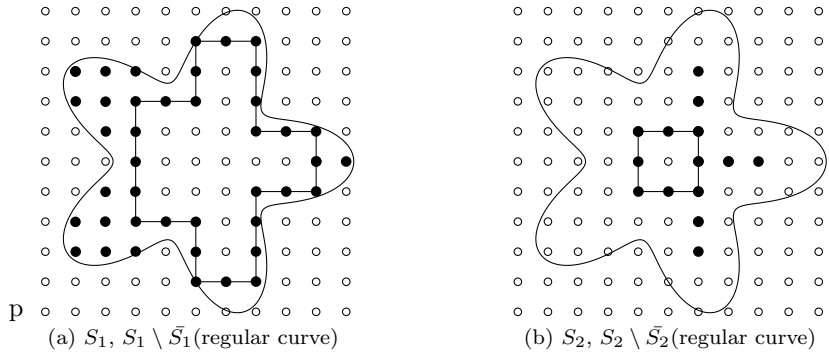
\begin{figure}[hbtp]
  \centering p  
\subfloat[$S_1$, $S_1\setminus\bar{S_1}$(regular curve)]{    \begin{tikzpicture}[
  decoration={
    markings,
    mark=between positions 0 and 1 step 1/42 with {\fill (0pt,0pt) circle (2pt);},
  }
]
    \foreach \x in {-2,-1.6,...,2}{
      \foreach \y in {-2,-1.6,...,2}{
        \draw (\x,\y) circle(0.05cm);
}
}
\draw[domain=0:360,scale=1,samples=500] plot (\x:{1.6+0.5*cos(5*\x)});
\path [postaction={decorate}]
(-1.6,1.2)-- ++(0.8,0) -- ++(0,-0.4) -- ++(0.8,0) -- ++(0,0.8) -- ++(0.8,0) --
++(0,-1.2) -- ++(0.8,0) -- ++(0,-0.4) -- ++(0.4,0) -- ++(-0.4,0) -- ++(0,-0.4)
-- ++(-0.8,0) -- ++(0,-1.2) -- ++(-0.8,0) -- ++(0,0.8) -- ++(-0.8,0) --
++(0,-0.4) -- ++(-0.8,0) -- ++(0,0.4) -- ++(0.4,0) -- ++(0,0.4) -- ++(0.4,0) --
++(0,0.8) -- ++(-0.4,0) -- ++(0,0.4) -- ++(-0.4,0) -- ++(0,0.4);
\fill (-1.6,1.2) circle(0.05cm);
\draw
(-0.8,0.8) -- ++(0.8,0) -- ++(0,0.8) -- ++(0.8,0) --
++(0,-1.2) -- ++(0.8,0) -- ++(0,-0.8) -- ++(-0.8,0) -- ++(0,-1.2) -- ++(-0.8,0) -- ++(0,0.8) -- ++(-0.8,0)
-- ++(0,1.6);
  \end{tikzpicture}}\hspace{2cm}
\subfloat[$S_2$, $S_2\setminus\bar{S_2}$(regular curve)]{\begin{tikzpicture}[
  decoration={
    markings,
    mark=between positions 0 and 1 step 1/20 with {\fill (0pt,0pt) circle (2pt);},
  }
]
    \foreach \x in {-2,-1.6,...,2}{
      \foreach \y in {-2,-1.6,...,2}{
        \draw (\x,\y) circle(0.05cm);
}
}
\draw[domain=0:360,scale=1,samples=500] plot (\x:{1.6+0.5*cos(5*\x)});
\path [postaction={decorate}]
(-0.4,0.4) -- ++(0.8,0) -- ++(0,0.8) -- ++(0,-1.2) -- ++(0.8,0) -- ++(-0.8,0) --
++(0,-1.2) -- ++(0,0.8) -- ++(-0.8,0) -- ++(0,0.8);
\draw
(-0.4,0.4) -- ++(0.8,0) -- ++(0,-0.8) -- ++(-0.8,0) -- ++(0,0.8);
  \end{tikzpicture}}\hspace{2cm}

  \caption{An illustration of the sets $S_1$, $\bar{S_1}$ and their difference $S_1 \setminus \bar{S}_1$ in a component without holes (left panel), and similarly for $S_2$. It is clear that $|S_1|\geq |S_1\setminus\bar{S_1}| > |S_2| \geq |S_2\setminus\bar{S_2}|$. The set $S_3$ in this example consists of a single point.}
  \label{fig:visualillustration}
\end{figure}

Following the terminology of \cite{Chen2014}, we define a line cell as an adjacent pair of points $(\bsk,\bsl): d(\bsk,\bsl)=1$ and a surface cell as a set of four points where all pairwise distances are $1$. We say that a pair of surface cells is point connected if they share a point, and line-connected if they share two points.  

This allows us to state the definition of a regular digital manifold.

\begin{mydef}\cite[Definition 5.14]{Chen2014}
A point set $S$ on a rectangular grid is a regular digital manifold if 
\begin{itemize}
\item all points belong to a surface cell;
\item for any pair of surface cells there is a line-connected path between them.
\end{itemize}
\end{mydef}

We also define a slightly broader class of digital manifolds:
\begin{mydef}
A set $S$ is a pseudoregular digital manifold if 
\begin{itemize}
\item all points belong to a surface cell;
\item for any pair of surface cells there is a point-connected path between them.
\end{itemize}
\end{mydef}

We have the following theorem.
\begin{thm}
 For any finite 2-dimensional set $S$,  $S-\bar{S}_1$ is a finite union of pseudoregular digital manifolds.
\end{thm}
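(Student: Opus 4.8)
The plan is to produce an explicit finite \emph{cover} of $S-\bar{S}_1$ by small regular digital manifolds, rather than to decompose it into connected components. The building blocks are the $3\times3$ grid blocks contained in $S$. First I would single out the doubly‑interior points $D=\{\bsk\in S:d(\bsk;S)\ge 2\}=\bigcup_{i\ge 2}S_i$, and for each $\bsk\in D$ let $B_{\bsk}$ be the $3\times3$ block of grid points centred at $\bsk$. Whenever $\bsk\in D$ all eight grid neighbours of $\bsk$ lie in $S$, so $B_{\bsk}\subseteq S$; conversely a $3\times3$ block lies in $S$ iff its centre is doubly interior, so $\bigcup_{\bsk\in D}B_{\bsk}$ is exactly the union of all $3\times3$ sub-blocks of $S$.

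The core step is the set identity $S-\bar{S}_1=\bigcup_{\bsk\in D}B_{\bsk}$. For the inclusion $\supseteq$: fix $\bsk\in D$. Then $\bsk\notin S_1$, hence $\bsk\notin\bar{S}_1$; and any other point $\bsl\in B_{\bsk}$ is a grid neighbour of $\bsk$, so $\bsl\in S$, and since $\bsl$ has the neighbour $\bsk$ with $d(\bsk;S)\ge 2$ it cannot belong to $\bar{S}_1$ (a point of $\bar{S}_1$ has all its neighbours at distance at most $1$). Hence $B_{\bsk}\subseteq S-\bar{S}_1$. For the inclusion $\subseteq$: take $\bsk\in S-\bar{S}_1$. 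If $d(\bsk;S)\ge 2$, then $\bsk\in D$ and $\bsk\in B_{\bsk}$. Otherwise $\bsk\in S_1$, and $\bsk\notin\bar{S}_1$ gives, by the definition of $\bar{S}_1$, a grid neighbour $\bsl$ of $\bsk$ with $d(\bsl;S)>1$; such a point lies in $S$ with $d(\bsl;S)\ge 2$, so $\bsl\in D$, and $\bsk$, being a neighbour of $\bsl$, lies in $B_{\bsl}$.

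Finally I would check that each $B_{\bsk}$ is a pseudoregular digital manifold --- in fact a regular one. Each of its nine points belongs to one of the four $2\times2$ surface cells that tile the block, and two surface cells that are adjacent within the block share an edge, that is, two points; hence any two of the four are joined by a line-connected path. Since $S$ is finite, so is $D$, and therefore $S-\bar{S}_1=\bigcup_{\bsk\in D}B_{\bsk}$ is a finite union of (pseudo)regular digital manifolds.

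The main obstacle is choosing the right decomposition. One is tempted to split $S-\bar{S}_1$ into point-connected components and show each is a pseudoregular digital manifold, but that is false: two $3\times3$ blocks meeting at a single corner form one point-connected component whose surface cells are \emph{not} point-connected to one another, even though the set is obviously a union of two regular manifolds. Allowing the pieces of the cover to overlap removes this difficulty, and the $3\times3$-block cover is the natural overlapping choice. I expect the more elaborate digital-topology bookkeeping the paper introduces around $\bar{S}_i$, line cells and surface cells is needed for the later quantitative estimate of $|S_{i+1}|$ in terms of $|S_i|$, not for this qualitative statement.
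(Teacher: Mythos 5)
Your proof is correct, but it takes a genuinely different route from the paper's. The paper argues directly with surface cells: it checks that every point of $S-\bar{S}_1$ belongs to a surface cell contained in $S-\bar{S}_1$ (points at distance $\geq 2$ are surrounded by such points and so lie in four cells; points of $S_1\setminus\bar{S}_1$ are adjacent to a point of $S_2$ and inherit one of its cells), and then groups the surface cells into point-connectedness classes, each class-union being a pseudoregular manifold by construction. You instead exhibit an explicit overlapping cover by $3\times 3$ blocks centred at the points of $D=\bigcup_{i\geq 2}S_i$; the identity $S-\bar{S}_1=\bigcup_{\bsk\in D}B_{\bsk}$ is verified correctly, and each block is indeed a regular, hence pseudoregular, manifold. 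Your version is cleaner and easier to check for the theorem as literally stated. What the paper's grouping buys, and your cover does not, is a decomposition into \emph{disjoint, maximal} pieces: \cref{thm:convexconcave} applies the bound $|S_{i+1}|\leq|S_i|-4$ to each manifold and sums over them, which is only meaningful if the number of pieces is controlled by the topology of $S$ (components and holes), not by $|D|$ overlapping blocks --- you correctly anticipate this. One small correction to your closing remark: two $3\times 3$ blocks that share a single corner point \emph{do} form a pseudoregular manifold, since their two central $2\times 2$ surface cells share that corner and hence are point-connected; the genuine obstruction to the "connected components of the point set" approach is a pair of blocks that are diagonally $8$-adjacent without sharing any point, which is $8$-connected as a point set but has no point-connected path of surface cells between the two blocks.
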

\begin{proof}
If $S-\bar{S}_1$ is empty, then the result is true. Henceforth we assume it is not empty. To prove the first requirement of a pseudoregular manifold, note that because of \cref{eq:30} and \cref{eq:46}, every $\bsk\in S_2$ is surrounded by points in $S-\bar{S}_1$ and is therefore part of $4$ surface cells.  Furthermore, because of \cref{eq:46} every point in $S_1-\bar{S_1}$ has at least
one neighbor in $S_2$, and is therefore part of a surface cell. Grouping surface cells by point-connectedness, the result is a union of pseudoregular manifolds.
\end{proof}

\begin{thm}
  The distance to the boundary is preserved for all points after the removal of
  $\bar{S}_1$,
  \begin{equation}
    \label{eq:47}
    \forall \bsk \in S\setminus \bar{S}_1 : d(\bsk;S-\bar{S}_1)=d(\bsk;S).
  \end{equation}
\end{thm}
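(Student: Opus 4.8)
The plan is to separate the trivial inequality from the one that carries all the content. Write $T=S\setminus\bar S_1$; since $\bar S_1\subseteq S_1\subseteq S$, the grid complement of $T$ is $(\mathbb{Z}^D\setminus S)\cup\bar S_1$, so for any $\bsk\in T$, using the definition \eqref{eq:1},
\[
 d(\bsk;T)=\min\Bigl(\min_{\bsl\notin S}\|\bsl-\bsk\|_\infty,\ \min_{\bsl\in\bar S_1}\|\bsl-\bsk\|_\infty\Bigr)=\min\Bigl(d(\bsk;S),\ \min_{\bsl\in\bar S_1}\|\bsl-\bsk\|_\infty\Bigr).
\]
Thus \eqref{eq:47} is equivalent to the single claim that, for every $\bsk\in S\setminus\bar S_1$ and every $\bsl\in\bar S_1$, one has $\|\bsl-\bsk\|_\infty\ge d(\bsk;S)$; in words, deleting the layer $\bar S_1$ never exposes a new exterior point closer to $\bsk$ than the original boundary was.

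I would prove this claim by contradiction. Fix $\bsk\in S\setminus\bar S_1$, set $m=d(\bsk;S)$, and suppose some $\bsl\in\bar S_1$ satisfies $m':=\|\bsl-\bsk\|_\infty\le m-1$. Since $\bsk\notin\bar S_1$ we have $\bsl\neq\bsk$, hence $1\le m'\le m-1$ and $m\ge 2$. The one structural fact I use repeatedly is that $d(\bsk;S)=m$ forces the closed $l_\infty$-ball $\{\mathbf{p}:\|\mathbf{p}-\bsk\|_\infty\le m-1\}$ to lie entirely in $S$, directly from \eqref{eq:1}. If $m'\le m-2$, then $\bsl$ and all of its neighbours (at distance exactly $1$) lie in this ball, hence in $S$; by the recursion \eqref{eq:30} this gives $d(\bsl;S)\ge 2$, contradicting $\bsl\in\bar S_1\subseteq S_1$. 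There remains the borderline case $m'=m-1$: let $\bsk'$ be the point obtained from $\bsl$ by stepping one unit toward $\bsk$ in each coordinate in which $\|\bsl-\bsk\|_\infty$ is attained, so $\bsk'$ is a neighbour of $\bsl$ (with $\bsk'=\bsk$ exactly when $m=2$). A short computation shows $\|\bsk'-\bsk\|_\infty\le m-2$, so $\bsk'\in S$; since $\bsk'$ is a neighbour of $\bsl\in\bar S_1$, the definition \eqref{eq:46} gives $d(\bsk';S)\le 1$, and together with $\bsk'\in S$ this forces $\bsk'\in S_1$. Hence there is $\mathbf{q}\notin S$ with $\|\mathbf{q}-\bsk'\|_\infty=1$, and the triangle inequality yields $\|\mathbf{q}-\bsk\|_\infty\le(m-2)+1=m-1<m$, contradicting $d(\bsk;S)=m$. (When $\bsk'=\bsk$ one instead gets $d(\bsk;S)\le 1<m$ directly.) This establishes the claim, and with it the theorem.

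The main obstacle is exactly the borderline case $m'=m-1$: unlike the "deep" case $m'\le m-2$, it cannot be closed by a containment argument alone, and it is here that the full strength of the definition of $\bar S_1$ — that \emph{every} neighbour of a point of $\bar S_1$ is within distance $1$ of the exterior of $S$ — is needed, combined with a careful choice of the coordinate directions in which to step from $\bsl$ toward $\bsk$ so that the stepped point stays inside the ball $\{\|\cdot-\bsk\|_\infty\le m-1\}\subseteq S$. I expect the remaining bookkeeping to be routine: coordinates of $\bsl-\bsk$ that do not attain the $l_\infty$-maximum simply contribute a zero step, and one should note that nothing in the argument is special to $D=2$, so the statement holds in every dimension.
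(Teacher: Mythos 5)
Your proof is correct, but it takes a genuinely different route from the paper's. The paper argues structurally, layer by layer: points of $S_1\setminus\bar S_1$ keep an exterior neighbour and so remain at distance $1$; by the definition \cref{eq:46} no point of $S_2$ has a neighbour in $\bar S_1$, so every neighbour of every point of $S_2$ survives the removal; the recursion \cref{eq:30} then propagates the conclusion to all deeper layers. That two-line induction also delivers the stronger statement that the entire layer decomposition $\{S_i\}_{i\ge 2}$ is unchanged, which is what the subsequent recursive application in \cref{thm:convexconcave} actually uses. You instead reduce the theorem to a single quantitative claim --- every point of $\bar S_1$ lies at $l_\infty$-distance at least $d(\bsk;S)$ from every retained interior point $\bsk$ --- and prove it by contradiction, using the fact that the ball of radius $m-1$ around $\bsk$ sits inside $S$, with a separate treatment of the borderline case $m'=m-1$ via a stepped point $\bsk'$ and the defining property of $\bar S_1$. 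Your version is longer but fully rigorous at the delicate boundary case (which the paper's sketch glosses over), it isolates exactly the metric reason the removal is harmless, and, as you note, nothing in it is specific to $D=2$; the paper's version is shorter and yields the layer-preservation statement in the form needed downstream. Both arguments are sound.
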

\begin{proof}
First note that the distance of a point is the minimum over all the $8$ connected
neighbours plus one. Therefore if $S_i$ stays the same, $S_{i+1}$ stays the same.
Then note that all neighbors of points in $S_2$ are retained in $S-\bar{S}_1$.
\end{proof}

\begin{thm}\cite[Theorem 5.4]{Chen2014}
  The boundary $\delta S$ of a regular 2-dimensional manifold $S$ is itself regular, and the
  union of closed regular curves.
\end{thm}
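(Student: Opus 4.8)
The statement is a purely combinatorial fact about digital $2$-manifolds, so the plan is to reduce everything to a local analysis at each grid point of $S$ and then to assemble the pieces globally. First I would pin down the notion of boundary in play: a point $\bsk \in S$ lies on $\delta S$ precisely when $\bsk$ belongs to some surface cell of $S$ but the surface cells of $S$ containing $\bsk$ do not close up around $\bsk$; equivalently, among the (at most four) unit grid squares incident to $\bsk$, those that are surface cells of $S$ form a proper nonempty consecutive arc --- one, two, or three consecutive squares, but neither all four nor a pair of opposite squares. The goal is then to show (i) every point of $\delta S$ has exactly two boundary-adjacent neighbours in $\delta S$, and (ii) the resulting graph on $\delta S$ is a disjoint union of cycles, each of which is a regular digital curve in the sense of \cite{Chen2014}.

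For the local step, fix $\bsk \in \delta S$ and list the surface cells of $S$ containing it. The line-connectedness hypothesis in the definition of a regular digital manifold forces these cells to form a single arc glued along line cells (shared edges), never merely meeting at the point $\bsk$; this is exactly where the regularity of $S$ enters, and it rules out the ``pinch'' configuration of two squares touching only at $\bsk$ as well as the opposite-pair configuration. The two extreme edges of this arc are then the two grid edges at $\bsk$ that are exposed to the complement of $S$; following each exposed edge to its other endpoint produces the two boundary neighbours of $\bsk$, and one checks that these endpoints again satisfy the arc condition, hence lie on $\delta S$. This establishes (i): with respect to boundary-adjacency, $\delta S$ is a $2$-regular graph.

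For the global step, a finite graph in which every vertex has degree exactly two is a finite disjoint union of simple cycles, and finiteness of $S$ bounds the number of components. It remains to check that each such cycle meets Chen's definition of a closed regular digital curve, i.e. has no self-contact violating the $1$-manifold condition. This again follows from the hypothesis on $S$: a self-contact of a boundary cycle at a grid point $\bsk$ would split the surface cells incident to $\bsk$ into two separate arcs, contradicting the single-arc structure obtained in the local step.

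I expect the local case analysis to be the main obstacle. Up to the symmetries of the grid there are only a handful of configurations of surface cells around a point, and in each one must verify, using line-connectedness, that exactly two grid edges are exposed and that the local picture is consistent across neighbouring points --- in particular that the degenerate opposite-pair and isolated-touching configurations are impossible in a regular digital manifold. Everything after that is bookkeeping, with finiteness handling the passage from degrees to cycles; in the write-up one may also simply invoke \cite[Theorem~5.4]{Chen2014}, the argument above being essentially the content of that reference.
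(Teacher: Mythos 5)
This statement is imported verbatim from Chen's book (Theorem 5.4 of \cite{Chen2014}); the paper offers no proof of it, so there is no internal argument to compare yours against. Judged on its own, your strategy --- classify the local configuration of surface cells around each boundary point, deduce that every point of $\delta S$ has exactly two boundary-adjacent neighbours, and then invoke the fact that a finite $2$-regular graph is a disjoint union of simple cycles --- is the standard and sensible route, and the global bookkeeping at the end is fine.

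The genuine gap is in the local step, and it is exactly the step you flag as the crux. You assert that the line-connectedness hypothesis ``forces these cells to form a single arc glued along line cells, never merely meeting at the point $\bsk$.'' But the regularity condition, as stated in the paper's Definition 1, is \emph{global}: it only requires that any two surface cells of $S$ be joined by \emph{some} line-connected path, possibly a long detour through the rest of $S$. It does not, by itself, rule out the pinch configuration in which the NE and SW unit squares at $\bsk$ are surface cells, the NW and SE squares are not, and the two lobes are nevertheless line-connected by a chain of cells going the long way around (an annulus squeezed to a single point is of this type). At such a point your boundary-adjacency graph has degree four, claim (i) fails, and the decomposition into disjoint simple cycles breaks down --- or at best the ``curves'' intersect, so they are no longer regular in Chen's sense. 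To close the gap you must either invoke a \emph{local} manifold condition (Chen's actual Definition 5.14 contains more structure than the paraphrase in this paper), or add an argument showing that a pinched configuration contradicts regularity; as written, the implication from the stated hypothesis to the single-arc local picture does not hold. A secondary, smaller issue is that your characterisation of $\delta S$ is introduced ad hoc rather than matched to Chen's definition of boundary, on which the meaning of ``$\delta S$ is itself regular'' depends.
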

\begin{thm}\cite[Lemma 9.1]{Chen2014}
  A closed 2-dimensional digital curve has 4 more convex corners than non-convex corners.
\end{thm}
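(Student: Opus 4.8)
The plan is to recognise a closed $2$-dimensional digital curve --- on the axis-aligned grid and adjacency used in this section --- as, combinatorially, the boundary of a simply connected finite rectilinear region, and then to run a discrete turning-number argument.

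First I would make the reduction explicit. A closed digital curve is a cyclic sequence of grid points in which consecutive points are at $l_\infty$-distance $1$ and every point has exactly two curve-neighbours. Collapsing maximal runs of collinear points into \emph{edges}, the curve becomes a simple closed rectilinear polygon whose vertices are precisely the \emph{corners}, i.e. the points where the curve makes a quarter turn. By the structural results quoted above --- and, underneath them, the digital Jordan curve theorem of \cite{Chen2014} --- this curve bounds a well-defined finite region $S$; I fix the orientation in which $S$ lies to the left of the direction of travel. At a corner the interior angle of $S$ equals $\tfrac{\pi}{2}$ (a \emph{convex} corner) or $\tfrac{3\pi}{2}$ (a \emph{non-convex} corner), while collinear non-corner points carry interior angle $\pi$ and play no role. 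Write $c$ and $r$ for the numbers of convex and non-convex corners.

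The core step is then the discrete Hopf Umlaufsatz: in one traversal of a simple closed curve the signed exterior angles sum to $2\pi$. A convex corner contributes $+\tfrac{\pi}{2}$ and a non-convex corner contributes $-\tfrac{\pi}{2}$, so $\tfrac{\pi}{2}(c-r)=2\pi$, that is, $c-r=4$. Equivalently one may use the interior-angle-sum identity $(V-2)\pi$ for a simple polygon with $V=c+r$ vertices and solve $\tfrac{\pi}{2}c+\tfrac{3\pi}{2}r=(c+r-2)\pi$ for the same conclusion. The only part that genuinely requires proof in the digital category is the turning-number statement itself. I would obtain it by induction on the number of surface cells (unit squares) enclosed by the curve: the base case is a single unit square, with $c=4$, $r=0$; for the inductive step one removes a boundary unit square that forms a convex ear --- for instance a cell incident to a lowest horizontal edge --- so that the remaining region is still connected with a single simple closed digital boundary, and one checks, over the finitely many local configurations at that cell, that $c-r$ is unchanged.

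The main obstacle is exactly this inductive step: (i) guaranteeing that a removable unit cell always exists whose deletion preserves connectedness and leaves a single simple closed digital boundary curve --- this is where the regularity notions and the digital Jordan theorem of \cite{Chen2014} do the real work, rather than being re-derived here --- and (ii) the bookkeeping that $c-r=4$ survives each local move, being careful about corners shared between the removed cell and its neighbours, where a convex corner can become non-convex or disappear entirely. An alternative that avoids ear-finding altogether is to invoke a digital Gauss--Bonnet theorem directly from \cite{Chen2014}, after which the claim collapses to the one-line angle count above; since the result is itself cited as \cite[Lemma 9.1]{Chen2014}, relying on that framework is the natural route.
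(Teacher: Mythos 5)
The paper does not prove this statement at all: it is imported verbatim as \cite[Lemma 9.1]{Chen2014}, so there is no in-paper argument to compare against. Your proof is the standard one for simple closed rectilinear curves and it is essentially correct: once the digital curve is identified with a positively oriented simple rectilinear polygon, either the Umlaufsatz (total signed turning $=2\pi$, with $\pm\tfrac{\pi}{2}$ per corner) or the interior-angle identity $\tfrac{\pi}{2}c+\tfrac{3\pi}{2}r=(c+r-2)\pi$ gives $c-r=4$ in one line. Two remarks. First, the angle-sum route is self-contained and strictly preferable to the cell-removal induction you sketch as an alternative; the latter is exactly where the delicate case analysis lives (existence of a removable ear whose deletion keeps a single simple boundary curve), and you are right to flag it as the weak point rather than attempt it. Second, a small caveat on the reduction itself: the section's ambient adjacency is $l_\infty$ (8-adjacency), under which a ``closed digital curve'' could in principle take diagonal steps, and your quarter-turn bookkeeping presumes a purely rectilinear (4-adjacent) curve. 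In Chen's framework the boundary curves of regular manifolds are indeed rectilinear (this is what the paper's Theorem~3, citing \cite[Theorem 5.4]{Chen2014}, supplies), so the hypothesis is satisfied in the paper's application, but your proof should state explicitly that it covers only that case. With that qualification, the argument is sound and is almost certainly the same one underlying the cited lemma.
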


The combination of these two theorems leads to
\begin{thm}
\label{thm:convexconcave}

  For a 2-dimensional pseudoregular manifold
  \begin{equation}
    \label{eq:29}
    |S_{i+1}|\leq |S_i|-4, \qquad i \geq 1.
  \end{equation}
\end{thm}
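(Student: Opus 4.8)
The plan is to settle the layer $i=1$ first, reducing it to Chen's corner count, and then to obtain all $i\ge 2$ by erosion. Two easy reductions come first. If $S$ is a disjoint union of connected pseudoregular manifolds, the layers $S_i$ split along the components and it suffices to establish a deficit of $4$ on each component; summing then yields a deficit of at least $4$, so we may assume $S$ connected. If $S_2=\varnothing$, then \cref{eq:30} forces $S_j=\varnothing$ for every $j\ge 2$ (a point at distance $j$ needs a neighbour at distance $j-1$), hence $S=S_1$; since every point of a pseudoregular manifold lies in a surface cell, $|S_1|=|S|\ge 4$ and $0=|S_2|\le|S_1|-4$ as required.

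For the remaining case $i=1$ with $S_2\neq\varnothing$, observe from \cref{eq:30} that $S_1$ is exactly the set of points of $S$ having a neighbour outside $S$, i.e. the digital boundary $\delta S$, and that $S_2$ is exactly the digital boundary of the one-step erosion $S\setminus S_1=\{\bsk:d(\bsk;S)\ge 2\}$. Using the two structural theorems just proved -- that $S-\bar{S}_1$ is a finite union of pseudoregular manifolds on which $d(\cdot;S)$ is unchanged -- I would first remove $\bar{S}_1$ and split the remaining set along its pinch points, so that Chen's Theorem 5.4 applies and exhibits $\delta S$ as a finite disjoint union of closed regular digital curves $\gamma_1,\dots,\gamma_r$ with $r\ge 1$; by Chen's Lemma 9.1 each $\gamma_j$ has exactly four more convex than non-convex corners. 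I would then match the points of $\delta(S\setminus S_1)$ against the edges and corners of the $\gamma_j$: shifting a closed regular curve one unit toward the interior deletes a grid point at each convex corner, creates one at each non-convex corner, and leaves the count unchanged along the straight runs, so the shifted curve carries at least four fewer points than $\gamma_j$; a neck that pinches off under the shift only deletes more points, which is consistent with the direction of the inequality. Summing over $j$ gives $|S_2|\le\sum_j(|\gamma_j|-4)\le|S_1|-4$.

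For $i\ge 2$ one applies the case just proven to the iterated erosion $\{\bsk:d(\bsk;S)\ge i\}$, which by repeated use of the same two structural theorems is again a finite union of (pseudo)regular manifolds whose distance-to-boundary is that of $S$ lowered by $i-1$; its distance-$1$ and distance-$2$ layers are then $S_i$ and $S_{i+1}$, and the base case yields $|S_{i+1}|\le|S_i|-4$ (on the bounded number of innermost layers where this erosion degenerates below a genuine manifold the estimate is not needed, and does not affect the asymptotic count that follows). The crux, and the step I expect to fight with, is precisely this base case: turning the digital-topological data of Chen's two results -- a \emph{corner count on abstract closed curves} -- into a \emph{cardinality drop} of the concrete grid layers $S_1\to S_2$ under a one-unit erosion. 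Two points there need real care: reconciling the hypothesis that $S$ is only \emph{pseudoregular} with Chen's Theorem 5.4, which is stated for \emph{regular} manifolds, by first passing to $S-\bar{S}_1$ and cutting along point-connections; and checking that every degeneracy of the erosion -- curves merging, splitting, or pinching off -- can only push $|S_2|$ further down, so that the deliberately conservative deficit $4$ (rather than the $8$ one computes for a plain rectilinear region) always holds.
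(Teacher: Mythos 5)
Your proposal follows essentially the same route as the paper's proof: reduce to regular pieces via the $S-\bar{S}_1$ erosion theorems, invoke Chen's corner count (four more convex than non-convex corners on each closed boundary curve), track how the one-unit inward shift loses points at convex corners and gains them at non-convex corners along otherwise unchanged straight runs, and then iterate the argument on the eroded set for $i\ge 2$. Your handling of the degenerate cases ($S_2=\varnothing$, pinching, coinciding image points, and the conservative deficit of $4$ rather than $8$) is more explicit than the paper's, but the underlying argument is the same.
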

\begin{proof}
Consider a regular manifold $S$. All points in $S_1$ form a closed digital curve
with 4 more convex corners than non-convex corners. As illustrated in
\cref{fig:proofillustration}, a point on a straight segment maps to one element
of $S_2$, a convex corner maps three points of $S_1$ to one of $S_2$, and a non-convex
corner maps one points of $S_1$ to three of $S_2$. Since the points that are being mapped to 
can also coincide, the bound \cref{eq:29} is obtained.

For a pseudoregular manifold, note that a pair of point connected components can
be regarded as 2 regular manifolds, with one point in common. Combining the
bounds for both regular manifolds and subtracting the one point in common we end
up with \cref{eq:29}. Then the theorem can be applied recursively by removing $\bar{S}_{i+1}$,
obtaining another pseudoregular set.
\end{proof}

For an illustration of this proof, see \cref{fig:proofillustration}.
\begin{figure}[hbtp]
  \centering
\subfloat[straight]{
    \begin{tikzpicture}
\node (whatever) at (-0.5,0) {};
\node (whatever) at (1.5,0) {};
\node[circle,color=black,inner sep=0pt,fill=black,minimum size=4pt] (a1) at (0,-1) {};
\node[circle,color=black,fill=black,inner sep=0pt,minimum size=4pt] (b1) at (0,0) {};
\node[circle,color=black,fill=black,inner sep=0pt,minimum size=4pt] (c1) at (0,1) {};
\node[circle,draw=black,fill=white, inner sep=0pt,minimum size=4pt] (a2) at (1,-1) {};
\node[circle,draw=black,fill=white, inner sep=0pt,minimum size=4pt] (b2) at (1,0) {};
\node[circle,draw=black,fill=white, inner sep=0pt,minimum size=4pt] (c2) at (1,1) {};
\draw (a1)--(b1)--(c1);
\draw (a2)--(b2)--(c2);
\draw[style=dashed, -latex] (b1)--(b2);
  \end{tikzpicture}}\hspace{2cm}
\subfloat[convex]{
    \begin{tikzpicture}
\node[circle,color=black,inner sep=0pt,fill=black,minimum size=4pt] (a1) at (0,-1) {};
\node[circle,color=black,fill=black,inner sep=0pt,minimum size=4pt] (b1) at (0,0) {};
\node[circle,color=black,fill=black,inner sep=0pt,minimum size=4pt] (c1) at
(0,1) {};
\node[circle,color=black,fill=black,inner sep=0pt,minimum size=4pt] (d1) at
(1,1) {};
\node[circle,color=black,fill=black,inner sep=0pt,minimum size=4pt] (e1) at (2,1) {};
\node[circle,draw=black,fill=white, inner sep=0pt,minimum size=4pt] (a2) at (1,-1) {};
\node[circle,draw=black,fill=white, inner sep=0pt,minimum size=4pt] (b2) at (1,0) {};
\node[circle,draw=black,fill=white, inner sep=0pt,minimum size=4pt] (c2) at (2,0) {};
\draw (a1)--(b1)--(c1)--(d1)--(e1);
\draw (a2)--(b2)--(c2);
\draw[style=dashed, -latex] (b1)--(b2);
\draw[style=dashed, -latex] (c1)--(b2);
\draw[style=dashed, -latex] (d1)--(b2);
  \end{tikzpicture}}\hspace{2cm}
\subfloat[non-convex]{
    \begin{tikzpicture}
\node[circle,color=black,inner sep=0pt,fill=black,minimum size=4pt] (a1) at (-1,0) {};
\node[circle,color=black,fill=black,inner sep=0pt,minimum size=4pt] (b1) at (0,0) {};
\node[circle,color=black,fill=black,inner sep=0pt,minimum size=4pt] (c1) at
(0,1) {};
\node[circle,draw=black,fill=white, inner sep=0pt,minimum size=4pt] (y2) at
(-1,-1) {};
\node[circle,draw=black,fill=white, inner sep=0pt,minimum size=4pt] (z2) at (0,-1) {};
\node[circle,draw=black,fill=white, inner sep=0pt,minimum size=4pt] (a2) at (1,-1) {};
\node[circle,draw=black,fill=white, inner sep=0pt,minimum size=4pt] (b2) at (1,0) {};
\node[circle,draw=black,fill=white, inner sep=0pt,minimum size=4pt] (c2) at (1,1) {};
\draw (a1)--(b1)--(c1);
\draw (y2)--(z2)--(a2)--(b2)--(c2);
\draw[style=dashed, -latex] (b1)--(b2);
\draw[style=dashed, -latex] (b1)--(a2);
\draw[style=dashed, -latex] (b1)--(z2);
  \end{tikzpicture}}
  \caption{Illustration accompanying \cref{thm:convexconcave}.}
  \label{fig:proofillustration}
\end{figure}

We conclude with a generalization that allows for a finite number of holes in a set. The set $S_{i+1}$ may be larger in this case than $S_i$, but the small growth does not invalidate the asymptotic complexity in the next section.

\begin{thm}
  For a 2-dimensional set containing $c$ 8-connected components and $h$ holes, the number of
  points a distance $i$ away from the boundary is bounded by
  \begin{equation}
    \label{eq:18}
    |S_{i+1}| \leq |S_i| - 4(c-h), \qquad i \geq 1.
  \end{equation}
\label{thm:withholes}
\end{thm}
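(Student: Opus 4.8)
The plan is to reduce the general case to \cref{thm:convexconcave} by splitting the boundary of $S$ into its topologically distinct closed curves and tracking how each one deforms under one step of erosion.

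First I would perform the same reduction used in \cref{thm:convexconcave}: by the two structural results above — that $S\setminus\bar S_1$ is a finite union of pseudoregular digital manifolds, and that removing $\bar S_1$ preserves the distance-to-boundary function on the surviving points — it follows that $S_2$ (and hence $|S_2|$) is unchanged while $|S_1|$ does not increase. It therefore suffices to prove the bound for $i=1$ when $S$ is itself a finite union of pseudoregular manifolds, and then to iterate by removing $\bar S_{i+1}$ at each successive step, exactly as in the proof of \cref{thm:convexconcave}.

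For such an $S$ the boundary $\delta S$ is, by \cite[Theorem 5.4]{Chen2014} together with the point-connected-component decomposition used in \cref{thm:convexconcave}, a disjoint union of simple closed digital curves, and these fall into two classes: each of the $c$ components contributes one \emph{outer} curve (whose enclosed region contains the component) and each of the $h$ holes contributes one \emph{hole} curve (whose enclosed region is that hole). The new ingredient is purely orientational. \cite[Lemma 9.1]{Chen2014} gives any simple closed digital curve $4$ more convex than non-convex corners relative to the region it encloses; for an outer curve that region lies inside $S$, so it has $4$ more corners that are convex as seen from $S$, whereas for a hole curve the enclosed region is the hole, so relative to $S$ it has $4$ more non-convex corners than convex ones. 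I would then rerun the erosion count of \cref{thm:convexconcave} curve by curve — a straight point maps one-to-one into $S_{i+1}$, a corner convex from $S$ collapses three points of $S_i$ onto one of $S_{i+1}$, a corner non-convex from $S$ spreads one point of $S_i$ over three of $S_{i+1}$, and coincidences among images only lower $|S_{i+1}|$ — so that, summing over all curves, each outer curve changes the count by at most $-4$ and each hole curve by at most $+4$, which is \cref{eq:18} at $i=1$. The recursion then proceeds as in \cref{thm:convexconcave}: a \cref{eq:47}-type invariance holds for $S\setminus\bar S_{i+1}$, this set is again a finite union of pseudoregular manifolds, and the partial erosion $S\mapsto S\setminus\bar S_{i+1}$ may delete or merge holes and delete or split components but cannot create a hole, so $c-h$ does not decrease along the recursion.

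The main obstacle I expect lies in this last step: controlling the split of $\delta S$ into outer and hole curves through the ``pinched'', point-connected unions that pseudoregular manifolds allow, and proving rigorously that $S\mapsto S\setminus\bar S_{i+1}$ cannot spawn a new hole, so that $c-h$ is genuinely monotone along the recursion. A related subtlety is that the erosion correspondence is multivalued near corners and wherever several boundary curves run close together, so the ``coincidences only shrink $S_{i+1}$'' step — and with it the precise constant $4$ attached to each hole curve, rather than the larger constant a naive count near a right-angled hole would suggest — has to be argued with some care instead of inferred by symmetry with \cref{thm:convexconcave}.
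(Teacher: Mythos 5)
Your proposal is correct and follows essentially the same route as the paper: apply the corner-counting bound of \cref{thm:convexconcave} separately to each of the $c$ components (each contributing $-4$) and, with the convex/non-convex roles reversed relative to $S$, to the layer of points nearest each of the $h$ holes (each contributing $+4$), then sum. You are somewhat more explicit than the paper about the orientation reversal for hole curves and about why $c-h$ cannot decrease under the erosion recursion, but these are refinements of the same argument rather than a different one.
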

\begin{proof}
For $c$ 8-connected components, \cref{thm:convexconcave} holds individually for each $S_{ij}$. Thus the bound for the combined sets $S_i$ is 
\begin{equation}
    |S_{i+1}| \leq |S_i| - 4c.
\end{equation}
A hole in this context is a connected component not in $P$ but entirely surrounded by it. Denote by $S_{iB}$ the points whose closest neighbor not in $P$ is in the hole. Then a similar reasoning to \cref{thm:convexconcave} shows that 
\begin{equation}
|S_{i,B}|\leq |S_i\setminus\bar{S}_{i,B}| < |S_{i+1,B}|+4.
\end{equation}
Summing the bounds completes the proof.
\end{proof}

\begin{remark}
 Theorem \ref{thm:convexconcave} does not hold in three dimensions and higher. In fact, the set $S_{i+1}$ can be larger than $S_i$ even for domains without a hole. A domain with an intrusion can have interior non-convex angles, at which a single point in $S_i$ maps to many points in $S_{i+1}$.
\end{remark}

\subsubsection{Bounding  $\Neps$}
\label{sec:actualproof}

\begin{thm}
\label{thm:traces}
Let $T_\domt$ and $B_\domf$ be as in
\cref{eqn:spacelimiting,eqn:bandlimiting}. We are interested in the behavior for
large $\singlefp$, with constant oversampling $\gamma=\singlefp/\bboxfp$. Furthermore, let $\ndombp(\singlefp)$ denote
the number of points in $P_{\domb}$ neighbouring the boundary, i.\e.\ $S_1$ from the previous section:
\begin{equation}
  \label{eq:39}
  P_{\domb} = \{\bsk\in\domt\quad | \quad\exists \bsl,||\bsl||_\infty=1: \bsk+\bsl \notin \domt\}.
\end{equation}
We further assume that the limit
\begin{equation*}
\underset{\singlefp\to\infty}{\lim}(h(\domtp)-c(\domtp))=C
\end{equation*}
exists with $C < \infty$, where $h(\domtp)$ and $c(\domtp)$ are as before the number of holes and distinct connected components of $\domtp$.
  Then for the operator $\TBT$ 
\begin{equation}
  \label{eq:23}
  \underset{\singlefp\to\infty}{\lim}tr(\TBT)-tr((\TBT)^2) = O\left(\ndombp\log{\singlefp}\right).
\end{equation}
\end{thm}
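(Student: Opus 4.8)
The plan is to rewrite the trace difference as a double sum over pairs of grid points — one inside $\domt$, one outside — of squared values of the Dirichlet kernel $B$, then bound the contribution of each interior point by the reciprocal of its $\ell_\infty$ distance to the boundary (the ``first observation''), and finally sum this over $\domtp$ using the counting estimates of \cref{sec:distancebound}. For the algebraic reduction, note that $B_\domf=FT_\domf F^{*}$ with $T_\domf$ a real diagonal $0/1$ matrix and $F$ unitary, so $B_\domf$ is an orthogonal projection, $B_\domf^2=B_\domf$; moreover $B_\domf$ acts on the periodic grid as circular convolution with the real, even kernel $B$ of \cref{eqn:bandlimiting}, that is $(B_\domf)_{\bsk\bsl}=B(\bsk-\bsl)$ on all of $\ibboxtp\times\ibboxtp$, and $\TBT=T_\domt B_\domf T_\domt$ is supported on $\idomtp\times\idomtp$ with this same kernel. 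Reading off the diagonal of $B_\domf^2=B_\domf$ gives $\sum_{\bsl\in\ibboxtp}|B(\bsk-\bsl)|^2=(B_\domf)_{\bsk\bsk}=B(\bszero)$ for every $\bsk$. Combined with $tr(\TBT)=\sum_{\bsk\in\idomtp}B(\bszero)$ and, using that $B$ is real and even, $tr((\TBT)^2)=\sum_{\bsk,\bsl\in\idomtp}|B(\bsk-\bsl)|^2$, subtraction and $\idomtp\subseteq\ibboxtp$ yield
\[
tr(\TBT)-tr((\TBT)^2)=\sum_{\bsk\in\idomtp}\;\sum_{\bsl\in\ibboxtp\setminus\idomtp}|B(\bsk-\bsl)|^2\;=:\;\sum_{\bsk\in\idomtp}c(\bsk).
\]

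Next I would establish the per-point bound $c(\bsk)\leq C_D/d(\bsk;\domtp)$. Fix $\bsk\in\idomtp$ and set $d=d(\bsk;\domtp)$. Every $\bsl\in\ibboxtp\setminus\idomtp$ has $\|\bsk-\bsl\|_\infty\geq d$, hence $|k_i-l_i|\geq d$ for at least one coordinate $i$ (interpreting differences on the periodic grid; this is harmless since $\domt$ is compactly contained in the box, so near interior points the periodic and ordinary $\ell_\infty$ distances agree up to a bounded additive term). Using $|B(\bsk-\bsl)|^2=\prod_{i=1}^D|b(k_i-l_i)|^2$ and a union bound over the large coordinate,
\[
c(\bsk)\;\leq\;\sum_{i=1}^{D}\Big(\sum_{|m|\geq d}|b(m)|^2\Big)\prod_{j\neq i}\Big(\sum_{m}|b(m)|^2\Big),
\]
the inner sums running over one period. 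Here $\sum_m|b(m)|^2=b(0)=\singlefp/\singletp$ is a fixed constant, while Jordan's inequality $\singletp|\sin(\pi m/\singletp)|\geq2|m|$ for $|m|\leq\singletp/2$ gives $|b(m)|\leq 1/(2|m|)$ and therefore $\sum_{|m|\geq d}|b(m)|^2\leq C'/d$ with $C'$ absolute. Hence $c(\bsk)\leq C_D/d(\bsk;\domtp)$, with $C_D$ depending only on $D$ and the oversampling rate.

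For the summation, group the interior points by distance: with $S_i$ the points of $\domtp$ at distance exactly $i$ (so $|S_1|=\ndombp$), the per-point bound gives $\sum_{\bsk\in\idomtp}c(\bsk)\leq C_D\sum_{i\geq1}|S_i|/i$. Since a point at distance $i$ surrounds itself with an $\ell_\infty$ ball of radius $i-1$ inside the box, $S_i=\emptyset$ for $i>\singletp/2$. By \cref{thm:withholes}, $|S_{i+1}|\leq|S_i|-4(c(\domtp)-h(\domtp))$, and since $h(\domtp)-c(\domtp)$ converges we get $|S_i|\leq\ndombp+O(i)$ (and simply $|S_i|\leq\ndombp$ whenever the domain has no net holes). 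Therefore
\[
\sum_{i=1}^{\lceil\singletp/2\rceil}\frac{|S_i|}{i}\;\leq\;\ndombp\sum_{i=1}^{\lceil\singletp/2\rceil}\frac1i+O(\singletp)\;=\;O(\ndombp\log\singletp)+O(\singletp),
\]
and since $\singletp=\Theta(\singlefp)$ and $\ndombp$ is at least of order $\singletp$ for any domain whose boundary scales with the box, the last term is absorbed, proving $tr(\TBT)-tr((\TBT)^2)=O(\ndombp\log\singlefp)$.

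The step I expect to be the main obstacle is the per-point estimate: one must combine the tensor-product structure of the Dirichlet kernel with the $\ell_\infty$ geometry so that ``distance to the exterior'' turns into genuine decay, and use the Jordan-type bound on $\sin$ carefully so that the tail sum $\sum_{|m|\geq d}|b(m)|^2$ is really $O(1/d)$ and not $O(\log\singletp/d)$; an extra logarithm there would degrade the final bound. By contrast, the projection identity of the reduction step is routine once $B_\domf$ is recognized as an orthogonal projection, and the role of the digital-topology bounds of \cref{sec:distancebound} in the summation step is precisely to keep each $|S_i|$ of the same order as $\ndombp$, so that the distance-weighted count produces only a single logarithm.
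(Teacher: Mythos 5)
Your proposal is correct and follows essentially the same route as the paper: the same reduction of $tr(\TBT)-tr((\TBT)^2)$ to a sum over interior--exterior pairs of $|B(\bsk-\bsl)|^2$ (your projection identity $B_\domf^2=B_\domf$ is the paper's Parseval step), the same $O(1/q_\bsk)$ per-point tail bound on the Dirichlet kernel via the largest inscribed $\ell_\infty$ ball, and the same final summation over the level sets $S_i$ using \cref{thm:withholes}. The only cosmetic differences are your union bound over coordinates in place of the paper's exact expansion of the complement of $Q_\bsk\times Q_\bsk$, and your explicit remark that the $O(\singletp)$ term from the holes is absorbed because $\ndombp\gtrsim\singletp$, which the paper leaves implicit.
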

\begin{proof}
The trace of $\TBT$ is
\begin{align}
  tr(\TBT) &= \sum_{\bsk\in\idomtp} B(\bsk-\bsk) \\
 &= \ndomtp B(\bszero)=  \frac{\ndomtp\ndomfp}{\nbboxtp}.
\label{eqn:trace1}
\end{align}
For the squared operator trace, note that 
\begin{equation}
tr((\TBT)^2)=||\TBT||_F=\sum_{\bsk\in\idomtp}\sum_{\bsl\in\idomtp}|(\TBT)_{\bsk,\bsl}|^2.
\label{eqn:tracesquared}
\end{equation}
Now, define an intermediate function
\begin{align*}
    f(\bsk) &= \sum_{\bsl\in\idomtp} |(\TBT)_{\bsk,\bsl}|^2,\qquad tr((\TBT)^2)=\sum_{\bsk\in\idomtp}f(\bsk).
\end{align*}
This $f$ can be rewritten as
\begin{align*}
  f(\bsk)&=\sum_{\bsk\in \idomtp} |B(\bsk-\bsl)|^2\\
&= \sum_{\bsl\in \ibboxtp} |B(\bsk-\bsl)|^2-\sum_{\bsl\in (\ibboxtp\setminus\idomtp)}|B(\bsk-\bsl)|^2
\end{align*}
The first sum is equal to $\frac{\ndomfp}{\nbboxtp}$ through Parseval's equation. The
second term is the sum over the index set $\ibboxtp \setminus\idomtp$. As
a shorthand notation, use 
\begin{equation}
q_{\bsk}=d(\bsk;\idomtp).
\end{equation}
The largest inscribed square around $\bsk$ is then given by $\bsk+Q_\bsk\times
Q_\bsk$, see \cref{fig:proofillustration2}.
with $Q_\bsk=\{-q_\bsk+1,\dots,q_\bsk-1\}$. Restricting $\idomtp$ to this square
and using that due to periodicity
$\sum_{\bsl\in\ibboxtp}|B(\bsl)|^2=\sum_{\bsl\in\ibboxtp-\bsk}|B(\bsl)|^2$, the
last sum can be bounded by

\begin{align}
  \sum_{\bsl\in(\ibboxtp\setminus\idomtp)}|B(\bsk-\bsl)|^2&<\sum_{\bsl\in(\ibboxtp\setminus
                                                         (\bsk+Q_\bsk\times Q_\bsk)}|B(\bsk-\bsl)|^2\\
&=\sum_{\bsl\in(\ibboxtp\setminus (Q_\bsk\times Q_\bsk)}|B(\bsl)|^2\\
%\label{eqn:prooftpstep}&=\prod_{d=1}^2\left(\sum_{k\in R_d\setminus Q_{\bszero_d}}|B(k)|^2_d+\sum_{k\in
 % Q_{\bszero_d}}|B(k)|^2_d\right)-\prod_{d=1}^2\sum_{k\in Q_{\bszero_d}}|B(k)|^2_d\\
  \label{eqn:prooftpstep}&=\left(\sum_{k\in R_d\setminus Q_{\bsk}}|B(k)|^2_d\right)^2+2\left(\sum_{k\in R_d\setminus Q_{\bsk}}|B(k)|^2\sum_{k\in
  Q_{\bsk}}|B(k)|^2\right).
\end{align}
Here $R_d=\{0,\dots,\singletp-1\}$, and $B(k)$ is the one-dimensional kernel.
\begin{figure}[hbtp]
  \centering   
    \begin{tikzpicture}
    \foreach \x in {-2,-1.6,...,2}{
      \foreach \y in {-2,-1.6,...,2}{
        \pgfmathparse{max(abs(\x),abs(\y))<1 ? int(1) : int(0)}
        \ifnum\pgfmathresult=1
           \fill (\x,\y) circle(0.05cm);
        \else
           \draw (\x,\y) circle(0.05cm);
        \fi
}
}
\draw[domain=0:360,scale=1,samples=500] plot (\x:{1.6+0.5*cos(5*\x)});
\fill[color=white] (0.4,-1.2) circle(0.3cm);
\node (pomega) at (0.4,-1.2) {$\idomtp$};
\fill[color=white] (1.6,1.6) circle(0.3cm);
\node (pr) at (1.6,1.6) {$\ibboxtp$};
\fill[color=white] (-0.0,0.0) circle(0.3cm);
\node (pr) at (-0.0,0.0) {$\bsk$};
\draw (-0.9,-0.9) rectangle (0.9,0.9);
\draw (-2.2,-2.2) rectangle (2.2,2.2) ;
  \end{tikzpicture}
  \caption{The largest inscribed square in $\idomtp$ around any point $\bsk$ is
    $\bsk+Q_\bsk\times Q_\bsk$. In this figure $q_\bsk=3$, leading to a
    $5\times 5$ square.}
  \label{fig:proofillustration2}
\end{figure}

From \cite{Wilson1987}, the first sum can be bounded by
\begin{equation}
  \label{eq:24}
  \sum_{k\in
  R_d\setminus Q_{\bsk}}|B(k)|^2_d=\sum_{k=q_\bsk}^{\singletp-q_\bsk}\left(\frac{\sin(\pi k / \os)}{\singletp \sin(\pi k/\singletp)}\right)^2<\frac{1}{4q_\bsk}+\frac{\os}{16 q_\bsk^2},
\end{equation}
Further, $\sum_{k\in
  Q_{\bsk}}|B(k)|^2_d< \os$. Then \cref{eqn:prooftpstep} can be bounded by a rational polynomial in $q_\bsk$.
\begin{align}
  \label{eq:12}
  \sum_{\bsl\in
  (\ibboxtp\setminus\idomtp)}|B(\bsk-\bsl)|^2&<\frac{\os}{2} q_\bsk^{-1}+\left(\frac{\os^2}{2^3}+\frac{1}{2^4}\right)q_\bsk^{-2}+\frac{\os}{2^5} q_\bsk^{-3}+\frac{\os^2}{2^8}q_\bsk^{-4},
\end{align}
with all coefficients independent of $\nbboxtp$.
% $f$ boils down to
% \begin{align*}
%   \sum_{k\in R_{id}}|B(k)|^2_d < \frac{\pi}{12}
% \end{align*}
% \begin{align*}
%   f(i)>\frac{\singlefp^D}{\singletp^D} - \prod_{d=1}^D\left(\frac{1}{4R_{id}}+\frac{\ndomfp}{4nR_{id}^2}+\frac{\ndomfp}{\nbboxtp} \right)+\frac{\singlefp^D}{\singletp^D}.
% \end{align*}
% Expanding this product, the highest order terms are
% \begin{align*}
%   f(i)>\frac{\ndomfp^D}{\nbboxtp^D} - \sum_{d=1}^D\left(\frac{C}{R_{id}}-O(\frac{1}{R_{id}^2})\right)
% \end{align*}
% knowing that all $R_{id}$s scale with $n$. The constant is
% $C=\frac{\ndomfp^{D-1}}{4\cdot \nbboxtp^{D-1}}$.
Then 
\begin{align*}
  tr((TBT)^2) &= \sum_{\bsk\in\idomtp} f(\bsk) \\
&> \frac{\ndomtp \ndomfp}{\nbboxtp}-\sum_{\bsk\in\idomtp}\left(\os q_\bsk^{-1}+O(q_{\bsk}^{-2})\right).
\end{align*}
Now recall from \cref{sec:distancebound} that $\idomtp$ can be divided into sets
$S_i=\{\bsk:q_\bsk=i\}$. \Cref{thm:withholes} states that $|S_{i+1}|<|S_i|-4(c-h)$.
Furthermore, the size of the bounding box dictates that $q_{\bsk}$ can never exceed $\frac{\singletp}{2}$. With this in mind it is easier to sum over the regions $S_i$ than over all points at once.
This leads to a bound
\begin{align}
\frac{\ndomtp\ndomfp}{\nbboxtp}-tr((TBT)^2)&<\sum_{i=1}^{\singletp/2}\sum_{S_i}\left(\os i^{-1}+O(i^{-2})\right)\\
&<\sum_{i=1}^{\singletp/2}\left(\ndombp+4i(h-c)\right)\left(\os i^{-1}+O(i^{-2})\right)\\
\label{eqn:squaredtrace}
&<C_1\ndombp\log{\singletp}+C_2\ndombp
\end{align}
\Cref{eqn:trace1,eqn:squaredtrace} combined give the desired result.
\end{proof}

Next, we want to relate the difference between iterated traces to the plunge region. This relies on a fairly general counting argument. Recall that the trace of a matrix equals the sum of its eigenvalues, and the trace of a matrix squared equals the sum of the squares of its eigenvalues.

\begin{thm}
\label{thm:combinatorial}
Let $1>\lambda_1(N)>\lambda_2(N)>\dots>\lambda_N(N)>0$ be a given ordered series where 
\begin{align}
  \label{eq:21}
  \sum_{i=1}^N\lambda_i(N) &= CN \\
  \label{eq:21b}
\sum_{i=1}^N\lambda_i(N)^2 &= CN-g(N)
\end{align}
where $g(N)=o(N)$ is a positive function. Then $|\{\lambda_k:\epsilon<\lambda_k<1-\epsilon\}|=O(g(N))$.
\end{thm}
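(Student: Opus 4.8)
The plan is to isolate the single auxiliary quantity $\sum_{i=1}^N \lambda_i(N)\bigl(1-\lambda_i(N)\bigr)$, which measures exactly how far the $\lambda_i$ sit from the two endpoints $0$ and $1$. First I would subtract the two hypotheses \eqref{eq:21} and \eqref{eq:21b}:
\begin{equation*}
\sum_{i=1}^N \lambda_i(N)\bigl(1-\lambda_i(N)\bigr) \;=\; \sum_{i=1}^N\lambda_i(N)-\sum_{i=1}^N\lambda_i(N)^2 \;=\; CN-\bigl(CN-g(N)\bigr) \;=\; g(N).
\end{equation*}
Since $0<\lambda_i(N)<1$ for every $i$, each summand $\lambda_i(1-\lambda_i)$ is strictly positive, so $g(N)$ bounds the partial sum over \emph{any} subset of the indices from above.

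Next I would restrict that sum to the plunge indices $P=\{k:\epsilon<\lambda_k(N)<1-\epsilon\}$. On $(\epsilon,1-\epsilon)$ the concave parabola $x\mapsto x(1-x)$ has infimum $\epsilon(1-\epsilon)$ attained only in the limit at the two endpoints, so $\lambda_k(1-\lambda_k)>\epsilon(1-\epsilon)$ for each $k\in P$. Combining this with the identity above gives
\begin{equation*}
g(N) \;\ge\; \sum_{k\in P}\lambda_k(N)\bigl(1-\lambda_k(N)\bigr) \;\ge\; \epsilon(1-\epsilon)\,|P|,
\end{equation*}
hence $|P|\le g(N)/\bigl(\epsilon(1-\epsilon)\bigr)=O(g(N))$, with the implied constant depending only on the fixed threshold $\epsilon$. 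This is exactly the claimed bound; note that the ordering hypothesis on the $\lambda_i$ is not actually used, and $g(N)=o(N)$ is needed only to make the conclusion nontrivial (it says the plunge region is asymptotically thinner than the full spectrum).

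I do not expect a genuine obstacle here: the argument is a one-line algebraic identity followed by an elementary pointwise estimate on $x(1-x)$. The only points deserving (minor) care are that the positivity of $g$ and of the individual terms legitimises discarding all indices outside $P$, and that the $O(\cdot)$ constant degrades as $\epsilon\to 0$ — harmless in the application since $\epsilon$ is a fixed truncation tolerance. If a cleaner constant is wanted one can use $x(1-x)\ge \epsilon(1-\epsilon)\ge \epsilon/2$ for $\epsilon\le 1/2$, yielding $|P|\le 2g(N)/\epsilon$.
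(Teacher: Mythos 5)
Your proof is correct, and it rests on the same underlying identity as the paper's: subtracting \eqref{eq:21b} from \eqref{eq:21} gives $\sum_i \lambda_i(1-\lambda_i)=g(N)$, and each plunge eigenvalue contributes at least a fixed $\epsilon$-dependent amount to that sum. The execution differs, though. The paper exploits the ordering to define the two indices $k_{\min}$ and $k_{\max}$ bracketing the plunge region, and derives the one-sided bounds $k_{\min}>CN-g(N)/\epsilon$ and $k_{\max}<CN+g(N)/\epsilon$ from the pointwise inequalities $\lambda_k^2<(1-\epsilon)\lambda_k+\epsilon$ and $\lambda_k^2<(1+\epsilon)\lambda_k-\epsilon$ on the appropriate index ranges; the count $O(g(N))$ then follows as $k_{\max}-k_{\min}$. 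You instead restrict the sum $\sum_i\lambda_i(1-\lambda_i)$ to the plunge set and use $\lambda_k(1-\lambda_k)>\epsilon(1-\epsilon)$ there, which yields the count directly, needs no ordering, and gives the slightly cleaner constant $1/(\epsilon(1-\epsilon))$ versus the paper's effective $2/\epsilon$. What the paper's route buys in exchange is \emph{location} information: it shows that the cluster of near-unit eigenvalues ends near index $CN$ (up to $O(g(N))$), which is how the paper justifies the claim in \cref{fig:intervals} that $|I_{1-\epsilon}|\approx \ndomtp\ndomfp/\nbboxtp$. Your argument, being purely a counting bound, does not recover that. For the statement as literally posed, your proof is complete and arguably tidier.
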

\begin{proof}
Define $k_{\min}$ and $k_{\max}$ as the limits of the intermediate region
\begin{equation}
  \label{eq:13}
  k_{\min}=\argmin_k \lambda_k:\lambda_k < 1-\epsilon, \qquad k_{\max} = \argmax_k \lambda_k:\lambda_k > \epsilon.
\end{equation}
Then $\forall k>k_{\min}:\lambda_k^2<(1-\epsilon)\lambda_k$ and $\forall k\leq
k_{\min}: \lambda_k^2<(1-\epsilon)\lambda_k+\epsilon $, so that
\begin{equation}
  \label{eq:14}
  \sum_k \lambda_k^2 <(1-\epsilon)\sum_k\lambda_k + \epsilon k_{\min}.
\end{equation}
Substituting \cref{eq:21,eq:21b} leads to
\begin{equation}
  \label{eq:25}
  k_{\min} > CN-\frac{g(N)}{\epsilon}.
\end{equation}
Similarly, $\forall k<k_{\max}:\lambda_k^2<(1+\epsilon)\lambda_k-\epsilon$ and
$\forall k\geq k_{\max}:\lambda_k^2<(1+\epsilon)\lambda_k$, so that
\begin{equation}
  \sum_k\lambda_k^2<(1+\epsilon)\sum_k\lambda_k-\epsilon k_{\max}.
\end{equation}
Combined with \cref{eq:21,eq:21b} this yields the upper bound
\begin{equation}
  \label{eq:27}
  k_{\max}<CN+\frac{g(N)}{\epsilon}
\end{equation}
\end{proof}

% \begin{proof}The proof works by contradiction. First, assume that a constant $C_1<1$ exists so that $\lambda_1<C_1$ as
% $N\to\infty$. Then, 
% \begin{align}
%   \label{eq:22}
%   \sum_{i=1}^N\lambda_i^2 &<  C_1 \sum_{i=1}^N\lambda_i \\
%   & < CN-(C-CC_1)N,
% \end{align}
% so that $g(N)=(C-CC_1)N\neq o(N)$. Thus the assumption is invalid and $\lambda_1$
% approaches 1 infinitesimally close. This reasoning can be repeated for every $i$
% up until $i=CN-g(N)$. Since for $\lambda_{k(N)}$, assume a $\lambda_{k(N)}<C_{k(N)}<1$ exists, then
% \begin{align}
%   \label{eq:22}
%   \sum_{i=1}^N\lambda_i^2 &<  (1-C_{k(N)})k(N)+C_{k(N)} \sum_{i=1}^N\lambda_i \\
%   & < (1-C_{k(N)})k(N)+C_{k(N)}CN \\
%   & < CN-(C-C_{k(N)}C)N+(1-C_{k(N)})k(N).
% \end{align}
% The contradiction argument holds as long as $g(N)<(C-C_{k(N)}C)N-(1-C_{k(N)}k$,
% or 
% \begin{equation}
% k(N)<CN-O(g(N))
% \end{equation}
% \end{proof}

\begin{thm}
\label{thm:combined}
Let $\domt,T_\domt,B_\domf$ and $P_{\domb}$ be as in Theorem \ref{thm:traces}. Then for the operator $T_{\domtp} B_{\domf} T_{\domtp}$ 
\begin{equation}
  \label{eq:theorem3}
  \Neps = O\left(\ndombp\log{\singletp}\right)
\end{equation}
where $\Neps$ is as in \cref{eq:20}.
\end{thm}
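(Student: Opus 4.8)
The plan is to obtain Theorem~\ref{thm:combined} by feeding the two trace estimates of Theorem~\ref{thm:traces} into the counting argument of Theorem~\ref{thm:combinatorial}. I would first record the spectral setup: $\TBT$ is Hermitian positive semidefinite with nonzero subblock $AA'$, its eigenvalues $\lambda_i$ are real and, by the properties listed above, lie in $[0,1]$, and they coincide with the squared singular values of $A$. Hence $\Neps$ from \cref{eq:20} equals the number of $\lambda_k$ lying in the interval $(\epsilon^2,(1-\epsilon)^2)$, which is contained in $(\epsilon^2,1-\epsilon^2)$; so, after harmlessly replacing $\epsilon$ by the smaller fixed constant $\epsilon^2$, it suffices to bound the number of eigenvalues of $\TBT$ in an interval bounded away from $0$ and $1$. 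I set $N := \ndomtp$, the size of $AA'$, and order $1 \ge \lambda_1 \ge \dots \ge \lambda_N \ge 0$. The $\ndomtp - \min(\ndomtp,\ndomfp)$ zero eigenvalues contribute nothing to either trace and lie outside the plunge count, so they may be discarded; likewise, repeated eigenvalues and eigenvalues equal to $1$ cause no difficulty, since the proof of Theorem~\ref{thm:combinatorial} only uses a weakly decreasing rearrangement in $[0,1]$ rather than the strict ordering literally stated there.

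Next I would match the hypotheses of Theorem~\ref{thm:combinatorial}. The computation inside the proof of Theorem~\ref{thm:traces} gives the exact identity $\sum_i \lambda_i = tr(\TBT) = \ndomtp\ndomfp/\nbboxtp$, so \eqref{eq:21} holds with $C = \ndomfp/\nbboxtp$, which is a constant under the fixed-oversampling assumption. Theorem~\ref{thm:traces} itself supplies
\begin{equation*}
g(N) := tr(\TBT) - tr\bigl((\TBT)^2\bigr) = \sum_i \lambda_i - \sum_i \lambda_i^2 = O\bigl(\ndombp\log\singletp\bigr) \ge 0,
\end{equation*}
which is precisely \eqref{eq:21b}. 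The only hypothesis that is not automatic is $g(N)=o(N)$, and I would justify it through the box-counting dimension of the boundary: since $\domt$ has positive measure we have $\ndomtp = \Theta(\singletp^{D})$, whereas $\ndombp = \singletp^{\,d+o(1)}$ with $d$ the Minkowski dimension of $\domb$; hence $\ndombp\log\singletp = \singletp^{\,d+o(1)} = o(\singletp^{D}) = o(\ndomtp)$ as soon as $d<D$. This covers every Lipschitz domain ($d=D-1$) and, in two dimensions, every domain whose boundary is not space-filling, fractal boundaries with $D-1\le d<D$ included. With \eqref{eq:21}, \eqref{eq:21b} and $g(N)=o(N)$ in hand, Theorem~\ref{thm:combinatorial} yields $\Neps = O(g(N)) = O(\ndombp\log\singletp)$; this equals $\singletp^{\,d+o(1)}$, which is the dependence on the Minkowski dimension advertised in the introduction.

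I do not expect a genuine obstacle here: the analytic work is already done in Theorem~\ref{thm:traces}, and what remains is bookkeeping --- passing to the nonzero part of the spectrum, controlling the constant $C$, matching the $\epsilon$'s, and checking $g(N)=o(N)$. If one wishes to sidestep the $o(N)$ discussion altogether, the bound also follows in a single line without Theorem~\ref{thm:combinatorial}: every eigenvalue $\lambda_k$ in the plunge interval satisfies $\lambda_k(1-\lambda_k) > \epsilon^2(1-\epsilon^2)$, hence
\begin{equation*}
\epsilon^2(1-\epsilon^2)\,\Neps \;<\; \sum_k \lambda_k(1-\lambda_k) \;=\; tr(\TBT) - tr\bigl((\TBT)^2\bigr) \;=\; O\bigl(\ndombp\log\singletp\bigr),
\end{equation*}
which is essentially the key inequality buried inside the proof of Theorem~\ref{thm:combinatorial}, applied directly to $\TBT$.
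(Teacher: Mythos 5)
Your proposal is correct and follows essentially the same route as the paper, which likewise obtains the result by feeding the trace bound of Theorem~\ref{thm:traces} into the counting argument of Theorem~\ref{thm:combinatorial} together with the identification of the eigenvalues of $\TBT$ with the squared singular values of $A$ (cf.\ \cref{eq:32}). Your additional bookkeeping --- verifying $g(N)=o(N)$ via the Minkowski dimension of the boundary, and the closing one-line bound $\epsilon^2(1-\epsilon^2)\,\Neps < tr(\TBT)-tr\bigl((\TBT)^2\bigr)$ that bypasses Theorem~\ref{thm:combinatorial} altogether --- is sound and, if anything, more careful than the paper's two-line proof.
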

\begin{proof}
The proof follows directly from \cref{thm:traces} and \cref{thm:combinatorial},
and noting that for square matrices $Tr(A^k)=\sum_k \lambda^k$.
\end{proof}
% \subsubsection{Not simply connected domains}
% Say the domain consists of 
% \begin{equation}
%   \label{eq:2}
%   \dom=\dom_0 / \left(\dom_1 \cup \dom_2 \cup \dots\right)
% \end{equation}
% where $\dom_0, \dom_1, \dom_2, \dots$ are simply connected domains.
% Then notice that for each of $\dom_1,\dots$, $R_{i}\leq R_{i+1}+4$.
% This means for each of the subdomains we can bound the sum by 
% \begin{align}
%   \label{eq:3}
% & N(\epsilon) <n_{\delta\dom_i}\sum_{i=1}^{R_{max}}\frac{C}{i}+\sum_{i=1}^{R_{max}}\left(\frac{4(i-1)C}{i}\right)\\
% &<Cn_{\delta\dom_i} \log(R_{max})+O(R_{max})\\
% &=O(n_{\delta\dom_i}\log(\ndomtp)
% \end{align}
% \begin{align}
%   \label{eq:4}
%   N(\epsilon) &=\sum_i\left(O(n_{\delta\dom_i}\log(\ndomtp)+O(R_{max})\right)\\
% &=O(\delta\dom_i\log(\ndomtp))+O(n_d R_{max})
% \end{align}
% where $n_d$ is the number of domains.

\begin{remark}
\Cref{thm:combined} gives a bound in terms of $\ndombp(\singlefp)$. For any
2-dimensional non-fractal
domain, $\ndombp=\mathcal{O}(\singlefp)$. To see this, note that
$d=\lim_{\singlefp\to\infty}\frac{\log{\ndombp}}{\log{\singlefp}}$ is equal to
the box-counting or Minkowski-Bouligand definition of the boundary dimension \cite{Falconer1990}. For a non-fractal
domain this is equal to the topological dimension of the boundary which is 1.
\end{remark}

% \Daan{en how about:}
% \begin{thm}
%  Let the domain $\domt \subset \mathbb{R}^2$ in Theorem \ref{thm:combined} be such that the Minkowski-Bouligand dimension of its boundary equals its topological dimension $1$, i.e. it is not a fractal. Then
% \begin{equation*}
%   \Neps = O\left(\singletp^D \log{\singletp}\right)
% \end{equation*}
% \end{thm}
% \begin{proof}
%  The proof follows from the preceding one and the observation that the Minkowski-Bouligand definition of the boundary dimension is precisely equal to
%  \begin{equation}
%  d=\lim_{\singlefp\to\infty}\frac{\log{\ndombp}}{\log{\singlefp}}.
%  \end{equation}
% \end{proof}

% \Daan{Maak hier een theorem van! Dan hebben we ook een resultaat in 3d, zij het beperkt. Niet?}
\begin{remark}
As mentioned in the introduction, \Cref{thm:combined} leads to an
$O(\ndomfp^2\log(\ndomfp)^2)$ complexity for \cref{alg:dpss2} on 2D domains. It is however difficult to extend the results from
\cref{sec:distancebound} to higher dimensions, as there is no straightforward equivalent of
\cref{thm:convexconcave}. If extended to those domains
\cref{thm:combined} yields asymptotic reductions in higher
dimensions, though the savings have diminishing returns, generally of the order
$O(N^{3d-2})$ versus $O(N^{3d})$ for a full SVD.
\end{remark}
%%% Local Variables:
%%% mode: latex
%%% TeX-master: "paper2d"
%%% End:

% Experiments
\section{Numerical results}
\label{sec:experiments}
This section contains examples and numerical results for various two-dimensional domains and
method parameters. The aim is to demonstrate the asymptotic complexity, convergence
properties and robustness of the algorithm.
Algorithm \ref{alg:dpss2} was implemented using a randomized algorithm for the
low rank subsystem.
% The algorithm from \cref{sec:algorithm} is implemented through using an estimate
% for the plunge region size discussed in
% \cref{sec:plungeestimate}. 
A selection of possible geometries was used, shown in 
\cref{fig:domains}. All domains are normalized to have equal area. These domains
where chosen to showcase differences in results for contrasting properties:
\begin{itemize}
\item The square and diamond show the method is not rotation invariant.
\item The square and disk show the effect of corners
\item The disk and ring show the effect of a simply connected domain versus a not
  simply connected domain. 
\item A double asteroid is included to study boundaries that are not smooth.
\end{itemize}
The precise effect of the domain on complexity and accuracy is discussed in
\cref{sec:domaininfluence}.

Throughout these experiments, the basis of choice is a Fourier basis on the
rectangle $[-T,T]\times [-T,T]$ with $N^2$ degrees of freedom. Unless
specified otherwise, the value for $T$ is 2 and the oversampling factor
$\ndomtp/\ndomfp$ is taken to
be 4. The cutoff $\epsilon$ (expressed through the estimate of $\Neps$) is
consistently $10^{-14}$.
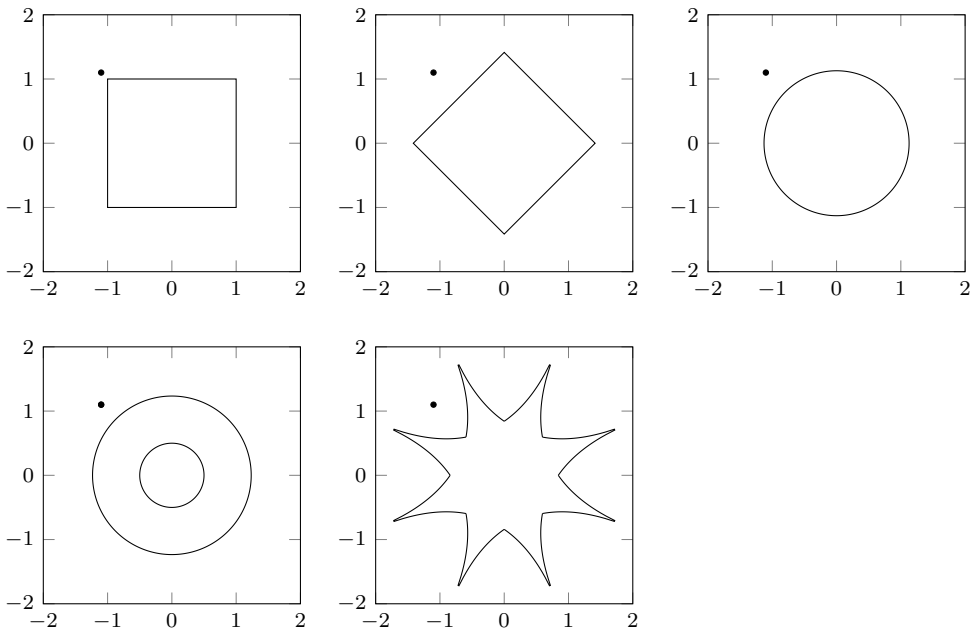
\begin{figure}[hbt]
  \centering
\begin{tikzpicture}
\begin{groupplot}[
    group style={group size=3 by 2},
    axis lines=box,
    axis equal image,
    height = 5cm,
    width = 5cm,
    xmin = -2,
    xmax = 2,
    ymin = -2,
    ymax = 2,
     ]
    \nextgroupplot
    \addplot [only marks, mark size=1] table {
-1.1 1.1
};
    \draw (axis cs:1,1) -- (axis cs:-1,1) -- (axis cs:-1,-1) -- (axis cs:1,-1)
    -- cycle;
    \nextgroupplot
    \addplot [only marks, mark size=1] table {
-1.1 1.1
};
    \draw (axis cs:0,{sqrt(2)}) -- (axis cs:{sqrt(2)},0) -- (axis cs:0,-{sqrt(2)}) -- (axis cs:-{sqrt(2)},0)
    -- cycle;
\nextgroupplot
    \addplot [only marks, mark size=1] table {
-1.1 1.1
};
    \addplot[data cs=polar,domain=0:360,samples=360,smooth]
    (x,{2/sqrt(pi)});
    \nextgroupplot
    \addplot [only marks, mark size=1] table {
-1.1 1.1
};
    \addplot[data cs=polar,domain=0:360,samples=360,smooth]
    (x,{sqrt(4/pi+0.25)});
    \addplot [only marks, mark size=1] table {
-1.1 1.1
};
    \addplot[data cs=polar,domain=0:360,samples=360,smooth]
    (x,0.5);
    \nextgroupplot
    \addplot [only marks, mark size=1] table {
-1.1 1.1
};
    \addplot[data cs=polar,domain=0:45,samples=100]
    (x,{1.449^2/(abs(cos(x-22.5))+abs(sin(x-22.5))+2*sqrt(abs(sin(2*x-45))/2))});
    \addplot[data cs=polar,domain=90:135,samples=100]
    (x,{1.449^2/(abs(cos(x-22.5))+abs(sin(x-22.5))+2*sqrt(abs(sin(2*x-45))/2))});
    \addplot[data cs=polar,domain=180:225,samples=100]
    (x,{1.449^2/(abs(cos(x-22.5))+abs(sin(x-22.5))+2*sqrt(abs(sin(2*x-45))/2))});
    \addplot[data cs=polar,domain=270:315,samples=100]
    (x,{1.449^2/(abs(cos(x-22.5))+abs(sin(x-22.5))+2*sqrt(abs(sin(2*x-45))/2))});
    \addplot[data cs=polar,domain=45:90,samples=100]
    (x,{1.449^2/(abs(cos(x+22.5))+abs(sin(x+22.5))+2*sqrt(abs(sin(2*x+45))/2))});
    \addplot[data cs=polar,domain=135:180,samples=100]
    (x,{1.449^2/(abs(cos(x+22.5))+abs(sin(x+22.5))+2*sqrt(abs(sin(2*x+45))/2))});
    \addplot[data cs=polar,domain=225:270,samples=100]
    (x,{1.449^2/(abs(cos(x+22.5))+abs(sin(x+22.5))+2*sqrt(abs(sin(2*x+45))/2))});
    \addplot[data cs=polar,domain=315:360,samples=100]
    (x,{1.449^2/(abs(cos(x+22.5))+abs(sin(x+22.5))+2*sqrt(abs(sin(2*x+45))/2))});
\end{groupplot}
\end{tikzpicture}
  \caption{Test domains used throughout this section. The dot marks the location
  of the singularity in the third test function.}
  \label{fig:domains}
\end{figure}
\begin{remark}
Faster options exist to approximate functions on rectangular regions, including
tensor product 1D Fourier Extensions. The experiments in this section do not exploit this structure.
\end{remark}
\label{sec:results}
\subsection{Complexity}
\Cref{fig:timings} shows execution time for Fourier frame approximations with
increasing degrees of freedom. The approximant is irrelevant here
since complexity of \cref{alg:dpss2} is independent of the right hand side. Our
algorithm computes the equivalent of a truncated SVD and is applied to the
sampled function in a single step. The domain is also
largely irrelevant since the plunge region size is similar for the chosen
domains, see \cref{sec:domaininfluence}. Therefore the timings are shown for just the one example: approximating
\begin{equation*}
  f(x,y)=e^{(x+y)}\cos(20xy)
\end{equation*}
on a disk of area 4. 
\setlength{\figurewidth}{8cm}
\setlength{\figureheight}{6cm}
\begin{figure}[hbtp]
  \centering
  \includegraphics{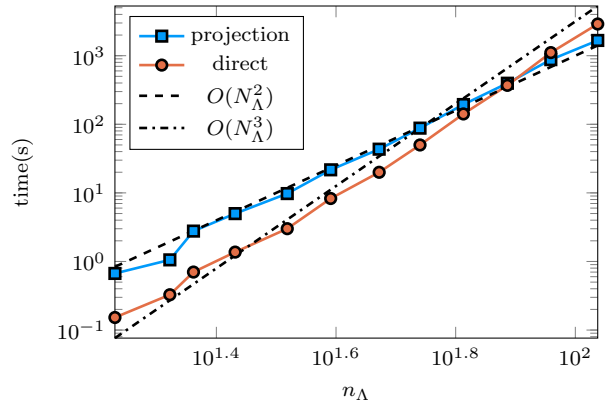}
  \caption{Execution time for a 2D frame approximation, using both a direct
    solver and the projection algorithm. $O(\ndomfp^2)$ and $O(\ndomfp^3)$ shown dashed in black.}
  \label{fig:timings}
\end{figure}

The results confirm the $O(\ndomfp^2\log^2{\ndomfp})$ complexity of the projection
algorithm, with the dominant cost being the SVD used in the randomized SVD solver. Unfortunately, the direct method is only overtaken for $\singlefp>90$, making
the projection method mostly suited for problems requiring a large number of
degrees of freedom, such as oscillatory functions. However, the algorithm
provides control over the regularization through the chosen accuracy $\epsilon$ of the low rank
problem, which the direct solver lacks. In the presence of noise on the order of $\delta$, one may want to choose $\epsilon > \delta$ in order to retain numerical stability \cite{Adcock2016}.

\subsection{Accuracy}
\label{sec:accuracy}
To show convergence, the frame approximation method was applied to a set of test
functions, for increasing degrees of freedom. The test functions are 
\begin{itemize}
\item A well-behaved, smooth function 
  \begin{equation*}
    f(x,y)=e^{x+y}.
  \end{equation*}
\item A function with a singularity inside the bounding box
  \begin{equation*}
    f(x,y)=\frac{1}{((x-1.1)^2+(y-1.1)^2)^2}.
  \end{equation*}
\item An oscillatory function
  \begin{equation*}
    f(x,y)=\cos(24x-32y)\sin(21x-28y).
  \end{equation*}
\item A function with a discontinuity in the first order partial derivatives
  \begin{equation*}
    f(x,y)=|xy|.
  \end{equation*}
\end{itemize}
The results are shown in \cref{fig:raccuracy,fig:accuracy}, for the residual
norm $\Vert Ax-b \Vert_2$ on the one hand and for the largest point error
$\Vert \mathcal{F}-f \Vert_\infty$ on the other hand, sampled randomly in the domain
(10000 samples). There are a few interesting observations to be made regarding
the convergence behavior for different target functions.

\begin{itemize}
\item The approximation error for the smooth function shows superalgebraic
convergence on all domains, strengthening claims in this regard
\cite{Adcock2016}. The only exception is the star-shaped point error.
\item The approximation error for the oscillatory function behaves exactly as
expected, decreasing rapidly once the highest oscillatory mode can be resolved
by the basis functions. 
\item \Cref{fig:accsingular} shows the results for a function with a singularity
right outside the domain of interest. Similar to the 1D case, this results in a
slower, yet still superalgebraic rate of convergence.
\item A function that has $s$ continuous derivatives will exhibit order $s+1$
  convergence, as seen in \cref{fig:accabs}, for the residual error. The
largest point error shows very little, if any, convergence.. 
\end{itemize}
\setlength{\figurewidth}{8cm}
\setlength{\figureheight}{6cm}
\begin{figure}[hbtp]
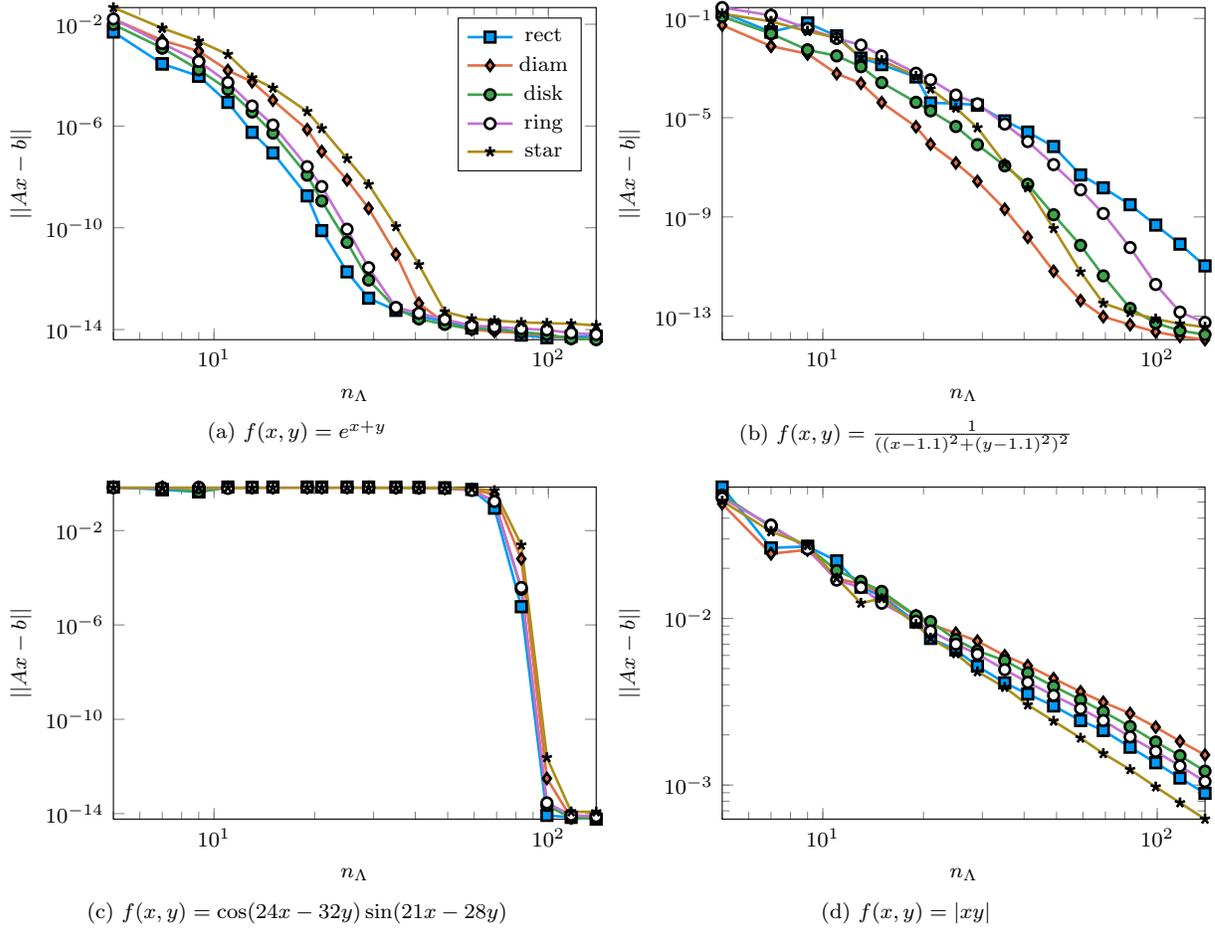

  \centering
\subfloat[$f(x,y)=\e^{x+y}$]{  \includegraphics{img/reasy.tikz}\label{fig:reasy}}
\subfloat[$f(x,y)=\frac{1}{((x-1.1)^2+(y-1.1)^2)^2}$]{  \includegraphics{img/rsingular.tikz}\label{fig:rsingular}}\\
\subfloat[$f(x,y)=\cos(24x-32y)\sin(21x-28y)$]{  \includegraphics{img/roscillatory.tikz}\label{fig:roscillatory}}
\subfloat[$f(x,y)=|xy|$]{  \includegraphics{img/rabs.tikz}\label{fig:rabs}}
  \caption{Residuals for the approximations from \cref{fig:accuracy}.}
  \label{fig:raccuracy}
\end{figure}

\setlength{\figurewidth}{8cm}
\setlength{\figureheight}{6cm}
\begin{figure}[hbtp]
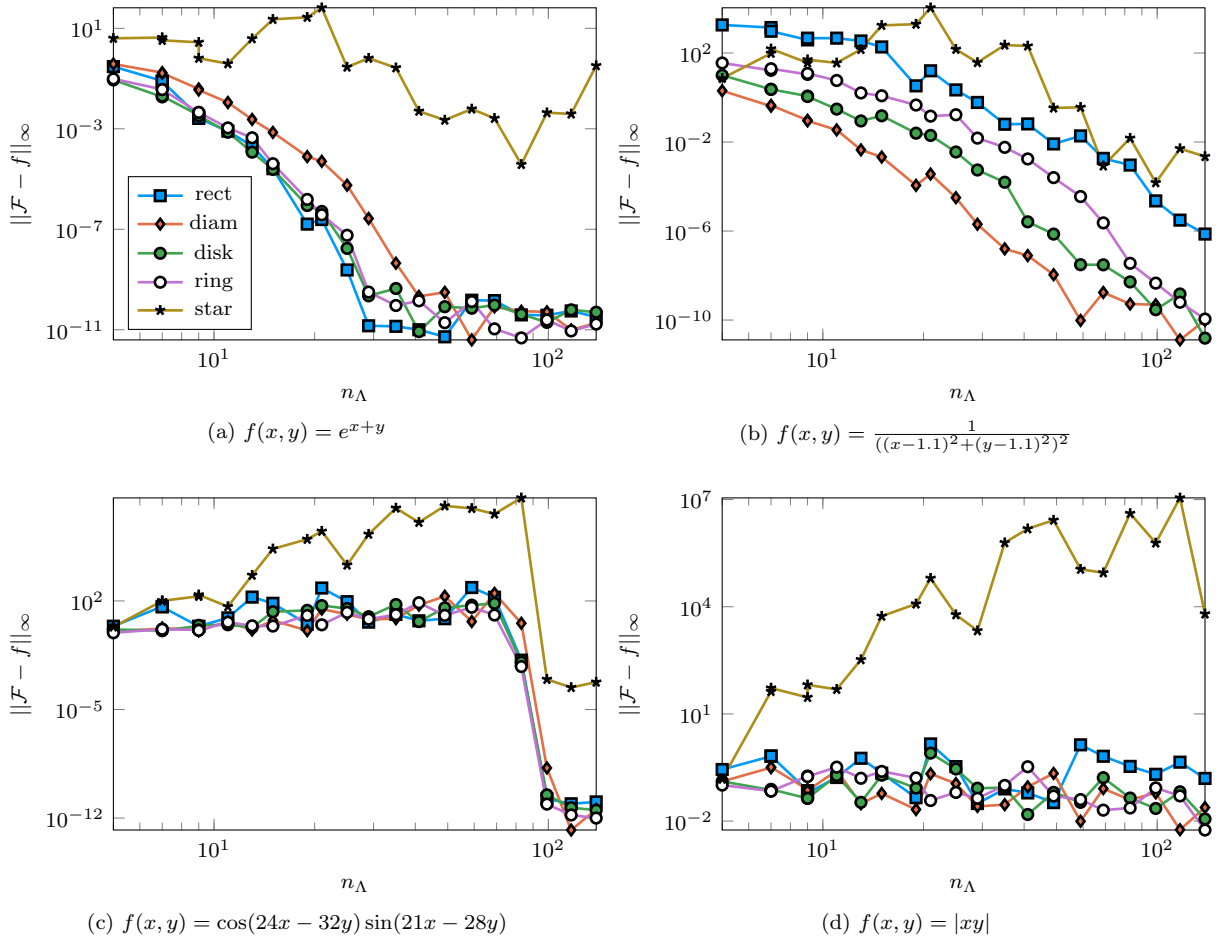

  \centering
\subfloat[$f(x,y)=e^{x+y}$]{  \includegraphics{img/acceasy.tikz}\label{fig:acceasy}}
\subfloat[$f(x,y)=\frac{1}{((x-1.1)^2+(y-1.1)^2)^2}$]{  \includegraphics{img/accsingular.tikz}\label{fig:accsingular}}\\
\subfloat[$f(x,y)=\cos(24x-32y)\sin(21x-28y)$]{  \includegraphics{img/accoscillatory.tikz}}
\subfloat[$f(x,y)=|xy|$]{  \includegraphics{img/accabs.tikz}\label{fig:accabs}}
  \caption{Maximum pointwise error for a 2D frame approximation, for different
    domains and approximants.}
  \label{fig:accuracy}
\end{figure}
\begin{remark}
Our algorithm only ensures small $\Vert Ax-b \Vert$ if a suitable $x$ exists. The effect
of sampling on $\Vert \mathcal{F}-f\Vert$ will be studied in a follow-up paper on \cite{Adcock2016} focusing
on the discrete case, and commented on in the next section.
\end{remark}

\subsection{Influence of domain shape}
\label{sec:domaininfluence}
\subsubsection{Plunge region estimates}
\label{sec:plungeestimate}
\Cref{thm:combined} leads to an estimate of the plunge region of the form
\begin{equation}
  \label{eq:31}
  \Neps=C_1\ndombp\log{\singletp}+O(\ndombp)
\end{equation}
where $C_1=\left(\frac{\singlefp}{\singletp} \right)^{D-1}\frac{1}{\epsilon^2}$. This is because the eigenvalues $\lambda_i$ of $\TBT$ are
the squares of the singular values $\sigma_i$ of the collocation matrix $A$, so
that 
\begin{equation}
  \label{eq:32}
  \epsilon<\sigma_i<1-\epsilon \Leftrightarrow \epsilon^2<\lambda_i<1-\epsilon+\epsilon^2.
\end{equation}
The constant $C_1$ is a gross overestimate, as shown in
\cref{fig:plungeconstant}, which plots the
ratio $\Neps/(\sqrt{\singletp}\log{\singletp})$ as a function of $\ndomfp$.
\setlength{\figurewidth}{8cm}
\setlength{\figureheight}{6cm}
\begin{figure}[hbtp]
  \centering
  \includegraphics{img/plungeconstantb.tikz}
  \caption{Estimate of plunge region size with respect to $\ndombp\log{\singletp}$.}
  \label{fig:plungeconstant}
\end{figure}
The ellipse, square and diamond seem to reach the asymptotic behavior
\cref{eq:31} with a constant $C_1\sim 10$. The ring and star domain have not yet
reached their plateau, but it is reasonable to assume this plateau, like the
bound from \cref{thm:combined}, is proportional in some way to the domain
boundary length. Using the Euclidean length, the plateau for the ring would be
at $10^{1.08}$ and for the star at $10^{1.39}$, both plausible from \cref{fig:plungeconstant}.
\begin{remark}
When using an $\epsilon$ close to machine precision, an alternative to using an estimate for the plunge region rank is to use an
adaptive form of the random matrix algorithm for unknown ranks. As per
\cite{Liberty2007}, this eliminates the need for a difficult estimation of $C_1$, at a maximum factor 2
increase in cost.
\end{remark}
\subsubsection{Influence on convergence}
\label{sec:convinfluence}
The influence of domain shape on convergence is readily apparent from
\crefrange{fig:acceasy}{fig:accabs}. There are a number of factors that combined
lead to the differences seen between the domains.

\paragraph{The maximum pointwise error}
In \cref{fig:accuracy} the error was taken as the infinity norm over $\Omega$
for $F-f$, calculated over 10000 random samples of $\Omega$. However, the actual approximation $F-f$ in all these
experiments was computed from an equispaced grid of collocation points. 
Some points in $\Omega$, e.\ g.\ at the spikes of the star shape may be
far away from the equispaced grid. Since no information about these points was
taken into account, convergence in these areas cannot be expected until they are
sufficiently covered by the grid. This is most apparent in the star-shaped
domain, as it has sharp features that are difficult to cover with an equispaced
grid. This problem is unique to the higher-dimensional case, as in the
one-dimensional problem the endpoints can be guaranteed to be included.

\Cref{fig:raccuracy} contains the experiments from \cref{fig:accuracy} but now
shows the relative error in the grid points only. The results show that all
approximations do converge as expected in the collocation points. The difference
is, as was expected most apparent in the star-shaped region. 

Moreover, this shows that the solution the algorithm provides to the least
squares system (\ref{eq:lsq}) is almost always
accurate up to the supplied tolerance.

\paragraph{Proximity to the singularity}
In the 1D case, the effect of the presence of a singularity on the convergence
rate was first quantified in \cite{Huybrechs2010} and later expanded upon in
\cite{Adcock2012}. They found that the first regime of convergence is
$\Vert \mathcal{F}-f\Vert =c\rho^{-N}$ where $\rho$ depends on the minimum of the
closest distance to the bounding box and the distance to the singularity.

This behaviour returns in \cref{fig:accsingular,fig:rsingular}.
With the singularity of the test function
\begin{equation*}
f(x,y)=\frac{1}{((x-1.1)^2+(y-1.1)^2)^2}
\end{equation*}
located at $(1.1,1.1)$, the rectangle is closest to the singularity. In rough order of
proximity to the singularity, the other
domains are star, ring, circle and diamond. The effect is most
apparent in \cref{fig:rsingular}, where the convergence rate of the error for
the diamond shape is significantly higher than the rate for the rectangle, where
for test functions without singularities they differ much less.
\subsection{Robustness}
To ensure the results remain stable for large $\ndomfp$, \cref{fig:robustness}
shows the approximation of a function 
\begin{equation*}
  f(x,y)=\sin\left(\frac{\singlefp}{2}(x+y)\right)
\end{equation*}
for increasing degrees of freedom $\ndomfp$ on a unit circle. This showcases the
close relationship between the number of degrees of freedom needed per
wavelength and the size of the extension region. For $T=1.2$ and $T=2$, the
extension region is narrow enough for the approximation to resolve the
oscillation. The main difference here is the convergence rate, which is slower
for smaller $T$ as per the 1D case. For $T=3$, the highest frequency mode present
in the Fourier basis of degree $\ndomfp$ is not enough to resolve the function, and it is impossible for convergence to occur.

\setlength{\figurewidth}{8cm}
\setlength{\figureheight}{6cm}
\begin{figure}[hbtp]
  \centering
  \includegraphics{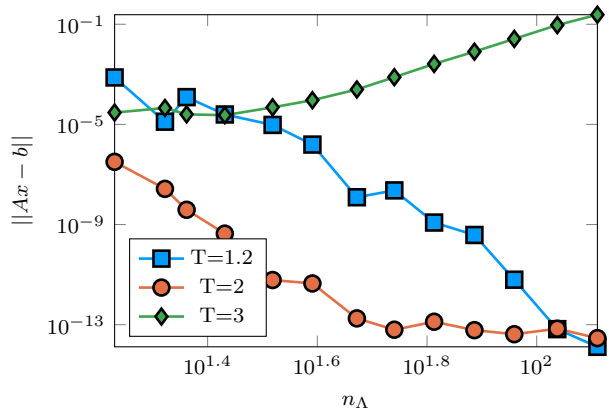}
  \caption{Accuracy for a 2D frame approximation for an increasingly oscillatory
  function, and different extension regions $\bboxt$.}
  \label{fig:robustness}
\end{figure}

%%% Local Variables:
%%% mode: latex
%%% TeX-master: "paper2d"
%%% End:

\section{Conclusion}
In this paper we examined an algorithm that provides for Fourier Frame
approximations on arbitrary domains. The generality of this approach was
demonstrated by approximating a variety of function on several different
domains, up to machine precision. The ease of implementing and applying the
method, combined with the demonstrated superalgebraic convergence make for an
appealing direction in higher dimensional frame approximations.

Speeding up the algorithm hinges on the occurence of a certain singular value
profile. We proved that when using a collocation approach, the matrices exhibit
this sort of profile. This newfound result connects the complexity of the
algorithm to a certain measure of the domain boundary. This is related to a long history of multidimensional
generalizations of Slepian's Prolate Spheroidal Wave functions, albeit in a
discrete sense.

The natural decoupling of the problem into a lower-dimensional problem
corresponding to the boundary of the domain and a well-conditioned problem in
the interior may very well lead to further improvements in complexity.

\section*{Acknowledgements}

The authors gratefully acknowledge interesting discussions on the topic of this paper with Ben Adcock, Vincent Copp{\'e}, Evren Yarman and Marcus Webb. The authors are supported by FWO Flanders projects G.A004.14 and G.0641.11.

\bibliography{mrabbrev,Paper2D}
\bibliographystyle{abbrv}

\end{document}